\newenvironment{theorem}[2][Theorem]{\begin{trivlist}
\item[\hskip \labelsep {\bfseries #1}\hskip \labelsep {\bfseries #2.}]}{\end{trivlist}}
\newenvironment{lemma}[2][Lemma]{\begin{trivlist}
\item[\hskip \labelsep {\bfseries #1}\hskip \labelsep {\bfseries #2.}]}{\end{trivlist}}
\newenvironment{remark}[2][Remark]{\begin{trivlist}
\item[\hskip \labelsep {\bfseries #1}\hskip \labelsep {\bfseries #2.}]}{\end{trivlist}}
\theoremstyle{definition}
\begin{document}

\title[Weighted Weak-Type Estimate for Multilinear Calder\'on-Zygmund Operators]{A Weighted Endpoint Weak-Type Estimate for Multilinear Calder\'on-Zygmund Operators}
\author{Cody B. Stockdale}
\address{Cody B. Stockdale, Department of Mathematics, Washington University in St. Louis, One Brookings Drive, St. Louis, MO, 63130, USA}
\email{codystockdale@wustl.edu}

\begin{abstract}
Two proofs of a weighted weak-type $\left(1,\ldots,1;\frac{1}{m}\right)$ estimate for multilinear Calder\'on-Zygmund operators are given. The ideas are motivated by different proofs of the classical weak-type $(1,1)$ estimate for Calder\'on-Zygmund operators. One proof uses the Calder\'on-Zygmund decomposition, and the other proof is motivated by ideas of Nazarov, Treil, and Volberg.

\smallskip
\noindent \textbf{Keywords:} singular integrals; multilinear operators; weak-type estimates; weighted estimates.
\end{abstract}
\maketitle

\section{Introduction}
The following weak-type $(1,1)$ estimate is essential to the theory of singular integrals.
\begin{theorem}{1}
Let $T$ be a Calder\'on-Zygmund operator. If $f \in L^1(\mathbb{R}^n)$, then $$\|Tf\|_{L^{1,\infty}(\mathbb{R}^n)}:=\sup_{t>0}t|\{|Tf|>t\}|\lesssim\|f\|_{L^1(\mathbb{R}^n)}.$$
\end{theorem}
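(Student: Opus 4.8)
The plan is to run the classical Calder\'on--Zygmund decomposition argument. Fix $t>0$; it suffices to estimate $t|\{|Tf|>t\}|$ uniformly. Applying the Calder\'on--Zygmund decomposition to $f$ at height $t$, I would write $f=g+b$, where the good part satisfies $\|g\|_{L^\infty}\lesssim t$ and $\|g\|_{L^1}\le\|f\|_{L^1}$, and the bad part is $b=\sum_j b_j$ with each $b_j$ supported on a dyadic cube $Q_j$, having mean zero, satisfying $\|b_j\|_{L^1}\lesssim t|Q_j|$, and with $\sum_j|Q_j|\lesssim t^{-1}\|f\|_{L^1}$. Then split
$$|\{|Tf|>t\}|\le\left|\left\{|Tg|>\tfrac t2\right\}\right|+\left|\left\{|Tb|>\tfrac t2\right\}\right|$$
and estimate the two terms separately.

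For the good part I would use the $L^2$ boundedness of $T$ (part of the definition of a Calder\'on--Zygmund operator) together with Chebyshev's inequality:
$$\left|\left\{|Tg|>\tfrac t2\right\}\right|\le\frac{4}{t^2}\|Tg\|_{L^2}^2\lesssim\frac{1}{t^2}\|g\|_{L^2}^2\le\frac{1}{t^2}\|g\|_{L^\infty}\|g\|_{L^1}\lesssim\frac{1}{t}\|f\|_{L^1}.$$
For the bad part, let $Q_j^*$ denote a fixed dilate of $Q_j$ (say $2\sqrt n\,Q_j$) and $\Omega^*=\bigcup_j Q_j^*$, so that $|\Omega^*|\lesssim\sum_j|Q_j|\lesssim t^{-1}\|f\|_{L^1}$. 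Outside $\Omega^*$ I would use Chebyshev again and the mean-zero property of $b_j$: writing $c_j$ for the center of $Q_j$,
$$\left|\left\{x\notin\Omega^*:|Tb(x)|>\tfrac t2\right\}\right|\le\frac{2}{t}\sum_j\int_{\mathbb{R}^n\setminus Q_j^*}\left|\int_{Q_j}\bigl(K(x,y)-K(x,c_j)\bigr)b_j(y)\,dy\right|dx.$$

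The crux of the argument is the kernel estimate: the smoothness (H\"ormander) condition on $K$ gives, uniformly in $j$ and in $y\in Q_j$, the bound $\int_{\mathbb{R}^n\setminus Q_j^*}|K(x,y)-K(x,c_j)|\,dx\lesssim1$. Granting this, Fubini yields $\int_{\mathbb{R}^n\setminus Q_j^*}|Tb_j(x)|\,dx\lesssim\|b_j\|_{L^1}\lesssim t|Q_j|$, and summing over $j$ controls the bad part off $\Omega^*$ by $\frac{2}{t}\sum_j t|Q_j|\lesssim t^{-1}\|f\|_{L^1}$. I expect the verification of this uniform kernel integral — carefully decomposing $\mathbb{R}^n\setminus Q_j^*$ into dyadic annuli around $c_j$ and summing the decay coming from the H\"ormander condition — to be the main technical point; everything else is bookkeeping. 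Combining the good-part bound, the measure of $\Omega^*$, and the bad-part bound off $\Omega^*$ gives $t|\{|Tf|>t\}|\lesssim\|f\|_{L^1}$, and taking the supremum over $t>0$ finishes the proof.
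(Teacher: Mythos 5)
Your proposal is correct: it is the classical Calder\'on--Zygmund decomposition proof of the weak $(1,1)$ estimate, which the paper does not reprove (Theorem 1 is stated as a motivating result, with references to Grafakos and Stein for this argument and to Nazarov--Treil--Volberg for an alternative) but takes as the template for Proof 1 of the main Theorem 4. The only structural deviation in the weighted multilinear version given there is that the good-part estimate invokes $L^m\times\cdots\times L^m\to L^1$ boundedness from Theorem 5 rather than $L^2\to L^2$; in the linear case $m=1$ one naturally falls back on $L^2\to L^2$, exactly as you do.
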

The original proof of Theorem 1 uses the Calder\'on-Zygmund decomposition of $f\in L^1(\mathbb{R}^n)$, see \cites{Grafakos1, Grafakos2, Stein}. The Calder\'on-Zygmund decomposition method has since become standard for proving endpoint weak-type results for related operators. The Calder\'on-Zygmund decomposition requires the underlying measure space to possess the doubling property: a Borel measure $\mu$ has the \emph{doubling property} if $$\mu(B(x,2r))\lesssim\mu(B(x,r))$$ for all $r>0$ and all $x$ in the space.

Extending the theory to more general settings, Nazarov, Treil, and Volberg gave a new proof of the weak-type $(1,1)$ estimate for Calder\'on-Zygmund operators on nonhomogeneous spaces in \cite {NTV1998}. A nonhomogeneous space is a metric measure space where the underlying measure $\mu$ fails to possess the doubling property, but instead satisfies the polynomial growth condition $$\mu(B(x,r))\lesssim r^n$$ for all $r>0$ and all $x$ in the space. Since Lebesgue measure on $\mathbb{R}^n$ satisfies the polynomial growth condition, the proof in \cite{NTV1998} immediately gives a different proof of Theorem 1.

The Nazarov-Treil-Volberg technique has been studied further. The technique was extended to handle measures with the upper doubling growth condition in \cite{HLYY2012}. It was shown in \cite{S2018} that, if one again assumes the doubling condition, crucial steps in the Nazarov-Treil-Volberg proof of Theorem 1 may be bypassed. Also in \cite{S2018}, an adaptation of the Nazarov-Treil-Volberg argument was used to prove a weighted weak-type $(1,1)$ inequality; and in \cite{SW2019}, an adaptation of the argument was used to prove the weak-type $\left(1,\ldots,1;\frac{1}{m}\right)$ estimate for multilinear Calder\'on-Zygmund operators. The linear weighted estimate was previously proved in \cite{OPR2016} using the Calder\'on-Zygmund decomposition, and the multilinear estimate was first proved in \cite{GT2002}, also using the Calder\'on-Zygmund decomposition. 

In this paper, we combine both of the previously mentioned settings by proving a weighted weak-type $\left(1,\ldots,1;\frac{1}{m}\right)$ estimate for multilinear Calder\'on-Zygmund operators. Two proofs are given -- one uses the Calder\'on-Zygmund decomposition and the other the Nazarov-Treil-Volberg method. See \cite{S2018} for a comparison between the Calder\'on-Zygmund decomposition and Nazarov-Treil-Volberg proofs.

We describe the motivating results. For $1\leq p<\infty$, we say that $w$ is an \emph{$A_p$ weight} if $w$ is locally integrable, positive almost everywhere, and satisfies the $A_p$ condition $$[w]_{A_p}:=\sup_{Q}\left(\frac{1}{|Q|}\int_{Q}w(x)dx\right)\left(\frac{1}{|Q|}\int_{Q}w(x)^{1-p'}dx\right)^{p-1}<\infty;$$ when $p=1$, the quantity $\left(\frac{1}{|Q|}\int_{Q}w(x)^{1-p'}dx\right)^{p-1}$ is interpreted as $\left(\inf_Q w\right)^{-1}$. 
\begin{theorem}{2}
Let $T$ be a Calder\'on-Zygmund operator. If $1\leq p<\infty$, $w\in A_p$, and $f\in L^1(w)$, then $$\|T(fw)w^{-1}\|_{L^{1,\infty}(w)}\lesssim [w]_{A_p}\max\{p,\log(e+[w]_{A_p})\}\|f\|_{L^1(w)}.$$
\end{theorem}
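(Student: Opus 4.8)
The plan is to adapt the Calder\'on--Zygmund decomposition proof of Theorem 1, tracking how $w$ enters each estimate. Since $f\mapsto T(fw)w^{-1}$ is linear, homogeneity reduces the assertion to the normalized inequality $w(\{x:|T(fw)(x)|>w(x)\})\lesssim C\|f\|_{L^1(w)}$ with $C=[w]_{A_p}\max\{p,\log(e+[w]_{A_p})\}$. The key preliminary fact is that an $A_p$ weight is doubling, $w(2Q)\le 2^{np}[w]_{A_p}w(Q)$ --- this follows from the $A_p$ condition alone together with the Jensen inequality $\langle w^{1-p'}\rangle_Q^{-(p-1)}\le\langle w\rangle_Q$ --- so $w\,dx$ supports a Calder\'on--Zygmund decomposition. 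I would apply it to $f$ relative to $w\,dx$ at a height $\tau$ to be chosen large: there are disjoint cubes $\{Q_j\}$ with $\tau<\frac{1}{w(Q_j)}\int_{Q_j}|f|\,w\le 2^{np}[w]_{A_p}\tau$, with $|f|\le\tau$ for a.e.\ $x\notin\Omega:=\bigcup_jQ_j$ and $\sum_jw(Q_j)\le\tau^{-1}\|f\|_{L^1(w)}$. Writing $f=g+\sum_j b_j$ with $g=f\chi_{\Omega^c}+\sum_j\big(\tfrac{1}{w(Q_j)}\int_{Q_j}f\,w\big)\chi_{Q_j}$ and $b_j=\big(f-\tfrac{1}{w(Q_j)}\int_{Q_j}f\,w\big)\chi_{Q_j}$, one gets $\|g\|_\infty\le 2^{np}[w]_{A_p}\tau$, $\|g\|_{L^1(w)}\le\|f\|_{L^1(w)}$, $\operatorname{supp} b_j\subseteq Q_j$, $\sum_j\int_{Q_j}|b_j|\,w\le 2\|f\|_{L^1(w)}$, and --- crucially, because the operator is applied to $b_jw$ rather than $b_j$ --- the cancellation $\int_{Q_j}b_j\,w=0$. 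Then $w(\{|T(fw)|>w\})\le w(\Omega^*)+w(\{|T(gw)|>w/2\})+w(\{x\notin\Omega^*:|T(\sum_jb_jw)|>w/2\})$, where $\Omega^*:=\bigcup_j2\sqrt n\,Q_j$.

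For the good part I would invoke the quantitative weighted $L^q$ theory. For $q\in(1,p']$ the dual weight $w^{1-q}$ lies in $A_q$ with $[w^{1-q}]_{A_q}\le[w]_{A_p}^{q-1}$ (since $w\in A_p\subseteq A_{q'}$), so the $A_2$ theorem and sharp extrapolation give $\|Th\|_{L^q(w^{1-q})}\lesssim_n(q-1)^{-1}[w]_{A_p}\|h\|_{L^q(w^{1-q})}$. Chebyshev's inequality for $w\,dx$ and the identity $\int(|T(gw)|w^{-1})^q w=\int|T(gw)|^q w^{1-q}$ yield
\[
w(\{|T(gw)|>w/2\})\le 2^q\|T(gw)\|_{L^q(w^{1-q})}^q\lesssim_n\big((q-1)^{-1}[w]_{A_p}\big)^q\!\int_{\mathbb{R}^n}\!|g|^q w\le\big((q-1)^{-1}[w]_{A_p}\big)^q\|g\|_\infty^{q-1}\|f\|_{L^1(w)}.
\]
Choosing $\tau\sim(2\sqrt n)^{np}$ and $q-1\sim\min\{1/p,1/\log(e+[w]_{A_p})\}$, one has $np(q-1)\lesssim n$ and $(q-1)\log[w]_{A_p}\lesssim1$, so all factors of the form $\|g\|_\infty^{q-1}$ and $[w]_{A_p}^{q-1}$ are $\lesssim_n1$, leaving $w(\{|T(gw)|>w/2\})\lesssim_n[w]_{A_p}\max\{p,\log(e+[w]_{A_p})\}\|f\|_{L^1(w)}$.

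For the bad part, $w(\Omega)\le\tau^{-1}\|f\|_{L^1(w)}\le\|f\|_{L^1(w)}$, and by the doubling bound $w(\Omega^*)\le(2\sqrt n)^{np}[w]_{A_p}\sum_jw(Q_j)\le(2\sqrt n)^{np}\tau^{-1}[w]_{A_p}\|f\|_{L^1(w)}\lesssim[w]_{A_p}\|f\|_{L^1(w)}$ for the chosen $\tau$. For $x\notin 2\sqrt n\,Q_j$ one has $|x-c_{Q_j}|\ge 2|y-c_{Q_j}|$ for every $y\in Q_j$, so the cancellation $\int_{Q_j}b_j\,w=0$ together with the H\"older smoothness of the kernel gives the pointwise bound $|T(b_jw)(x)|\lesssim_n\frac{\ell(Q_j)^\delta}{|x-c_{Q_j}|^{n+\delta}}\int_{Q_j}|b_j|\,w$; hence, by Chebyshev for $w\,dx$ (the weights cancelling in the integrand),
\[
w\big(\{x\notin\Omega^*:|T(\textstyle\sum_j b_jw)|>w/2\}\big)\le 2\!\int_{(\Omega^*)^c}\!\!\big|T(\textstyle\sum_j b_jw)\big|\,dx\lesssim_{n}\sum_j\Big(\int_{Q_j}\!|b_j|\,w\Big)\!\int_{(2\sqrt n Q_j)^c}\!\!\frac{\ell(Q_j)^\delta}{|x-c_{Q_j}|^{n+\delta}}\,dx\lesssim_{n,\delta}\|f\|_{L^1(w)},
\]
using $\int_{(2\sqrt nQ)^c}\ell(Q)^\delta|x-c_Q|^{-n-\delta}\,dx\lesssim_{n,\delta}1$ and $\sum_j\int_{Q_j}|b_j|\,w\le 2\|f\|_{L^1(w)}$. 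Summing the three contributions proves the theorem.

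The main obstacle is the organization needed to land the sharp constant $[w]_{A_p}\max\{p,\log(e+[w]_{A_p})\}$ rather than one blowing up exponentially in $np$: the decomposition must be taken relative to the nonhomogeneous-but-doubling measure $w\,dx$, so that $\sum_jw(Q_j)\le\tau^{-1}\|f\|_{L^1(w)}$ without ever comparing Lebesgue and $w$ averages on a single cube (which is impossible when $p>1$), yet its doubling constant is $\sim(2\sqrt n)^{np}$, which infects both $w(\Omega^*)$ and $\|g\|_\infty$. Raising the Calder\'on--Zygmund height to $\tau\sim(2\sqrt n)^{np}$ absorbs these powers, after which the only genuinely weight-dependent step, the good part, is controlled by the weighted $L^q$ inequality with $q$ tuned to $1+\min\{1/p,1/\log(e+[w]_{A_p})\}$ --- and it is precisely this tuning that produces the two competing terms $p$ and $\log(e+[w]_{A_p})$. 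A second route, following Nazarov, Treil, and Volberg, never enlarges the cubes: it keeps $\Omega$ alone and treats the collar $\Omega^*\setminus\Omega$ by a local $L^2$ argument, trading the choice of $\tau$ for a more delicate estimate of the bad part; this is the alternative signposted in the introduction.
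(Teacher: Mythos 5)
The paper does not supply its own proof of Theorem 2: it is quoted with references to \cite{OPR2016} for the Calder\'on--Zygmund decomposition proof and \cite{S2018} for the Nazarov--Treil--Volberg proof, and the paper's original content is the multilinear Theorem 4. Your argument correctly reconstructs the CZ-decomposition route, and the two ideas that produce the sharp constant are the right ones: raising the decomposition height to $\tau\sim c_n^{np}$ so that the doubling constant $\sim c_n^{np}[w]_{A_p}$ of $w\,dx$ is absorbed into $\|g\|_\infty^{q-1}$ (leaving only a single power of $[w]_{A_p}$), and estimating the good part through the sharp weighted $L^q$ bound for the dual weight $w^{1-q}\in A_q$ with $q-1\sim\min\{1/p,\,1/\log(e+[w]_{A_p})\}$, which is precisely what produces the factor $\max\{p,\log(e+[w]_{A_p})\}$. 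The bad-part estimate --- mean-zero of $b_jw$, kernel smoothness, and the weights cancelling under Chebyshev on $(\Omega^*)^c$ --- is standard and correct. The one step you should make explicit is that the implicit constant in $\|Th\|_{L^q(w^{1-q})}\lesssim(q-1)^{-1}[w]_{A_p}\|h\|_{L^q(w^{1-q})}$ carries only a \emph{single} power of $(q-1)^{-1}$ as $q\to1^+$; this requires the quantitative form of Rubio de Francia extrapolation, since a higher power of $q'$ there would give $\max\{p,\log(e+[w]_{A_p})\}$ to that higher power. For comparison, the paper's Proof 1 of Theorem 4 is the multilinear analogue of your argument, but in the $A_{(1,\ldots,1)}$ case the good part is handled by the unweighted $L^m\times\cdots\times L^m\to L^1$ bound (no exponent to tune) and the bad part by the geometric H\"ormander estimate of Lemma 4, so the final constant is a plain power of $[v_{\vec{w}}]_{A_1}$ with no logarithm; the $q$-optimization in your good-part estimate is exactly the refinement the linear $A_p$ result demands.
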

Theorem 2 was proved by Ombrosi, P\'erez, and Recchi in \cite{OPR2016} using the Calder\'on-Zygmund decomposition and by the author in \cite{S2018} using the Nazarov-Treil-Volberg argument. See \cites{CUMP2005, CRR2018, LOP2017, OP2016} for related mixed weak-type inequalities.

Much of the Calder\'on-Zygmund theory was extended to the multilinear setting by Grafakos and Torres in \cite{GT2002}. In particular, they proved the following endpoint weak-type estimate. 
\begin{theorem}{3}
Let $T$ be a multilinear Calder\'on-Zygmund operator. If $f_1,\ldots,f_m\in L^1(\mathbb{R}^n)$, then $$\|T(f_1,\ldots,f_m)\|_{L^{\frac{1}{m},\infty}(\mathbb{R}^n)}\lesssim \prod_{i=1}^m\|f_i\|_{L^1(\mathbb{R}^n)}.$$
\end{theorem}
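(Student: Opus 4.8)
The plan is to adapt the classical Calderón--Zygmund decomposition proof of Theorem 1 to the $m$-linear level. By the homogeneity $T(\lambda_1f_1,\dots,\lambda_mf_m)=(\lambda_1\cdots\lambda_m)T(f_1,\dots,f_m)$ we may normalize $\|f_i\|_{L^1(\mathbb R^n)}=1$ for every $i$, and then, unwinding the definition of $\|\cdot\|_{L^{1/m,\infty}(\mathbb R^n)}$, it suffices to prove that $|\{|T(f_1,\dots,f_m)|>\alpha^m\}|\lesssim\alpha^{-1}$ for every $\alpha>0$. Fix $\alpha$ and apply the Calderón--Zygmund decomposition at height $\alpha$ to each $f_i$: write $f_i=g_i+b_i$ with $b_i=\sum_jb_{i,j}$, where each $b_{i,j}$ has mean zero and is supported on a cube $Q_{i,j}$, the cubes $\{Q_{i,j}\}_j$ are pairwise disjoint and satisfy $\|b_{i,j}\|_{L^1}\lesssim\alpha|Q_{i,j}|$ and $\sum_j|Q_{i,j}|\lesssim\alpha^{-1}$, and $\|g_i\|_{L^\infty}\lesssim\alpha$, $\|g_i\|_{L^1}\le1$. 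Set $\Omega=\bigcup_{i,j}Q_{i,j}^{*}$, the union of suitably dilated cubes; then $|\Omega|\lesssim\alpha^{-1}$, so it remains to bound $|\{x\notin\Omega:|T(f_1,\dots,f_m)(x)|>\alpha^m\}|$.

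Multilinearity expands $T(f_1,\dots,f_m)$ into $2^m$ terms $T(h_1,\dots,h_m)$ with $h_i\in\{g_i,b_i\}$. The term with every $h_i=g_i$ is handled by the strong-type estimate $\|T(g_1,\dots,g_m)\|_{L^1}\lesssim\prod_{i=1}^m\|g_i\|_{L^m}$, available from the Calderón--Zygmund theory of $T$: interpolating $\|g_i\|_{L^m}^m\le\|g_i\|_{L^\infty}^{m-1}\|g_i\|_{L^1}\lesssim\alpha^{m-1}$ gives $\|T(\vec g)\|_{L^1}\lesssim\alpha^{m-1}$, whence Chebyshev's inequality yields $|\{|T(\vec g)|>2^{-m}\alpha^m\}|\lesssim\alpha^{-1}$. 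For each of the other $2^m-1$ terms let $S=\{i:h_i=b_i\}\neq\emptyset$ and fix $i_0\in S$; I claim that $\int_{\mathbb R^n\setminus\Omega}|T(h_1,\dots,h_m)(x)|\,dx\lesssim\alpha^{m-1}$, after which a second application of Chebyshev's inequality disposes of that term. To see the claim, decompose $b_{i_0}=\sum_kb_{i_0,k}$; for $x\notin Q_{i_0,k}^{*}$ the mean-zero property lets one replace $K(x,\dots,y_{i_0},\dots)$ by $K(x,\dots,y_{i_0},\dots)-K(x,\dots,c_{i_0,k},\dots)$, where $c_{i_0,k}$ is the center of $Q_{i_0,k}$, and the smoothness of $K$ in the $i_0$-th slot then supplies the gain $(\ell(Q_{i_0,k})/|x-c_{i_0,k}|)^{\varepsilon}$. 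One integrates out the remaining $m-1$ variables: in a good slot $i\notin S$ one bounds $g_i$ by $\|g_i\|_{L^\infty}\lesssim\alpha$ and integrates the kernel, which contributes a factor $\alpha$ and lowers the order of decay by $n$; in a bad slot $i\in S\setminus\{i_0\}$ one decomposes $b_i=\sum_lb_{i,l}$ and uses that $b_{i,l}$ lives on $Q_{i,l}$, at the cost $\|b_{i,l}\|_{L^1}\lesssim\alpha|Q_{i,l}|$. Integrating in $x$ over $\mathbb R^n\setminus Q_{i_0,k}^{*}$ and summing over $k$ and over the cube indices of the remaining bad factors then gives $\alpha^{m-1}$, using $\sum_j|Q_{i,j}|\lesssim\alpha^{-1}$.

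The main obstacle is precisely this estimate for the $2^m-1$ terms with a bad factor, because the direct multilinear analogue of the linear argument --- bounding $\int_{\mathbb R^n\setminus\Omega}|T(h_1,\dots,h_m)|$ by $\prod_i\|h_i\|_{L^1}$ --- fails, already for $T(b_1,g_2,\dots,g_m)$, where it produces a divergent sum over the cubes of $f_1$. Two features must be used to recover the correct power of $\alpha$. First, one must exploit the $L^\infty$ smallness $\|g_i\|_{L^\infty}\lesssim\alpha$ of the good factors, and not merely their $L^1$ bound: this is exactly what supplies the extra factor $\alpha$ per good slot that the argument needs, and without it the sum over $k$ diverges. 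Second, when $|S|\ge2$ one cannot replace the remaining bad factors by their $L^1$ norms but must keep the kernel's decay in each of the corresponding variables and then organize the resulting multi-index sum over the cubes $Q_{i,l}$ with care --- ordering those cubes by size and invoking the disjointness within each family $\{Q_{i,l}\}_l$, so that for a fixed cube only boundedly many essentially-larger cubes of another family meet its dilate, while the essentially-smaller ones there have total volume $O(\alpha^{-1})$. With these ingredients each bad term obeys $\int_{\mathbb R^n\setminus\Omega}|T(h_1,\dots,h_m)|\lesssim\alpha^{m-1}$, which together with the good-term bound and $|\Omega|\lesssim\alpha^{-1}$ completes the proof.
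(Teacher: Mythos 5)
Your framework is the correct one and, after setting $w_1=\cdots=w_m=1$ so that $v_{\vec w}=1$, it matches what Proof~1 of the paper's Theorem~4 reduces to: normalize, CZ--decompose each $f_i$ at height $\alpha$, expand $T$ multilinearly into $2^m$ terms, bound the all-good term via Theorem~5 and Chebyshev, excise the dilated exceptional set $\Omega$, and then estimate each remaining term in $L^1(\mathbb{R}^n\setminus\Omega)$. However, the heart of the proof --- the bound $\int_{\mathbb{R}^n\setminus\Omega}|T(h_1,\dots,h_m)|\lesssim\alpha^{m-1}$ for a term with at least two bad slots --- is not actually established. Your first account (fix one $i_0\in S$, use cancellation and kernel smoothness only in slot $i_0$, and replace every other bad factor by its $L^1$ norm $\lesssim\alpha|Q_{i,j}|$) diverges: in the bilinear $T(b_1,b_2)$ case it produces something of the shape $\alpha^2\sum_{k,l}|Q_{1,k}|\,|Q_{2,l}|\,\ell(Q_{1,k})^{\delta}\bigl(\ell(Q_{1,k})+\ell(Q_{2,l})\bigr)^{-(n+\delta)}$, whose $l$-sum is infinite once the separation between the two families of cubes has been discarded. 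Your closing paragraph correctly diagnoses this, but the proposed fix --- order cubes by size and count overlaps of dilates --- is only a gesture. Making it rigorous would require a dyadic decomposition in both cube side length and in $\mathrm{dist}(Q_{i_0,k},Q_{i,j})$, a packing count for the disjoint families in dyadic annuli, and uniformity over up to $m-1$ remaining bad slots; none of this is carried out, so the central claim remains unproved.

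The way the paper avoids all of this geometry is the key point your proposal misses: apply the cancellation in \emph{every} bad slot simultaneously. In Lemma~4 one subtracts $K(x,c_{1,j_1},\dots,c_{l,j_l},y_{l+1},\dots,y_m)$, so smoothness yields a gain $\sum_{i\le l}r_{i,j_i}^{\delta}$ rather than merely $r_{i_0,j_{i_0}}^{\delta}$. Then the supremum over $(y_1,\dots,y_l)\in\prod_i Q_{i,j_i}$ of the resulting kernel-difference integral is comparable to its infimum, which lets the product of cube measures be absorbed directly; one splits the multi-index sum according to which $r_{k,j_k}$ is largest, and in each piece the other cube indices sum freely --- by disjointness within each family, those sums become integrals over $\mathbb{R}^n$ that Lemma~3 absorbs. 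No distance dyadics are needed. This is the multilinear geometric H\"ormander condition of P\'erez--Torres. If you insist on a single-slot cancellation, the natural repair is to let $i_0$ depend on the tuple $(j_i)_{i\in S}$ (take the slot holding the largest cube), which recovers the same gain the paper exploits; a fixed $i_0$ independent of the tuple forces you into the unproven geometric counting argument.
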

As in the classical theory, their proof uses the Calder\'on-Zygmund decomposition. Other proofs, also using the Calder\'on-Zygmund decomposition, were later given in the bilinear setting by P\'erez and Torres in \cite{PT2014}, and by Maldonado and Naibo in \cite{MN2009}. Another proof was given by the author and Wick in \cite{SW2019} using a variation of the Nazarov-Treil-Volberg method.

Connecting the weighted theory and the multilinear theory, Lerner, Ombrosi, P\'erez, Torres, and Trujillo-Gonz\'ales introduced the classes of multilinear weights in \cite{LOPTTG2009}. We use the following notation for multilinear $A_{\vec{P}}$ weights: $1\leq p_1,\ldots, p_m \leq \infty$, $\frac{1}{m}\leq p \leq \infty$ satisfies $\frac{1}{p}=\frac{1}{p_1}+\cdots+\frac{1}{p_m}$, $\vec{P}=(p_1,\ldots,p_m)$, $\vec{w}=(w_1,\ldots,w_m)$, and $v_{\vec{w}}=\prod_{i=1}^m w_i^{\frac{p}{p_i}}$. We say $\vec{w} \in A_{\vec{P}}$ if $$[\vec{w}]_{A_{\vec{P}}}:=\sup_{Q}\left(\frac{1}{|Q|}\int_Q v_{\vec{w}}\right)^{\frac{1}{p}}\prod_{i=1}^m\left(\frac{1}{|Q|}\int_Qw_i^{1-p_i'}\right)^{\frac{1}{p_i'}}<\infty;$$ when $p_i=1$, the factor $\left(\frac{1}{|Q|}\int_Qw_i^{1-p_i'}\right)^{\frac{1}{p_i'}}$ is understood as $(\inf_Qw_i)^{-1}$. Note that the quantities $[\vec{w}]_{A_{\vec{P}}}^p$ and $[w]_{A_p}$ coincide when $m=1$. 

We give two proofs of the following theorem.
\begin{theorem}{4}
Let $T$ be a multilinear Calder\'on-Zygmund operator. If $\vec{w}\in A_{(1,\ldots,1)}$ and $f\in L^1(w_i)$ for all $i\in\{1,\ldots,m\}$, then \[\left\|T\left(f_1w_1v_{\vec{w}}^{\frac{1-m}{m}},\ldots,f_mw_mv_{\vec{w}}^{\frac{1-m}{m}}\right)v_{\vec{w}}^{-1}\right\|_{L^{\frac{1}{m},\infty}(v_{\vec{w}})} \lesssim [v_{\vec{w}}]_{A_1}^{2m^2+2m-2}\prod_{i=1}^m\|f_i\|_{L^1(w_i)}.\] 
\end{theorem}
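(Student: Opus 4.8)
\emph{Reductions and strategy.} I will transplant the weighted Calder\'on--Zygmund argument of Ombrosi--P\'erez--Recchi to the multilinear setting of Grafakos--Torres (the parallel Nazarov--Treil--Volberg route is sketched at the end). Put $v=v_{\vec w}$. The $A_{(1,\dots,1)}$ condition gives $w_i^{1/m}\in A_1$ for each $i$ (bound the $k\ne i$ factors by their infima) and, via H\"older, $v\in A_1$ with $[v]_{A_1}\le[\vec w]_{A_{(1,\dots,1)}}^{1/m}$; since the bound in Theorem~4 involves only $[v]_{A_1}$, the statement is governed entirely by $v$ once we substitute $g_i:=f_iw_iv^{(1-m)/m}$, so that $f_iw_i=g_iu$ with $u:=v^{(m-1)/m}\in A_1$ and $\|f_i\|_{L^1(w_i)}=\|g_i\|_{L^1(u)}$. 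By $m$-linearity of $T$ I normalize $\|f_i\|_{L^1(w_i)}=1$ for every $i$; fixing $t>0$, the task is to show
\[ v\bigl(\{x:|T(g_1,\dots,g_m)(x)|>t\,v(x)\}\bigr)\;\lesssim\;\bigl([v]_{A_1}^{2m^2+2m-2}/t\bigr)^{1/m}. \]

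\emph{The exceptional set.} The difficulty peculiar to the weighted multilinear case is that each $g_i$ lies only in $L^1(u)$, not in $L^1(v)$ or $L^1(dx)$, so the usual stopping set cannot be measured with $v$. The remedy is to use the one combination of the $g_i$ that does sit in $L^1(v)$: by the generalized H\"older inequality the function $\Phi:=\bigl(\prod_{i=1}^m|g_i|\bigr)^{1/m}v^{-1/m}$ obeys $\|\Phi\|_{L^1(v)}=\int\prod_i(|g_i|u)^{1/m}\le\prod_i\|g_i\|_{L^1(u)}^{1/m}=1$. Set $\Omega:=\{x:M_v\Phi(x)>\mu\}$, where $M_v$ is the maximal operator of the doubling measure $v\,dx$ and $\mu$ will be chosen as $t^{1/m}$ divided by a power of $[v]_{A_1}$; then $v(\Omega)\lesssim\mu^{-1}$, which with the eventual choice of $\mu$ handles the level set inside $\Omega$. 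Whitney-decompose $\Omega$ into cubes $\{Q_k\}$. On $\Omega^c$ one splits, near each $Q_k$, every $g_i$ into a principal piece controlled in the relevant $v$-average by $\mu$ and a localized remainder carrying a suitable cancellation; expanding $T(g_1,\dots,g_m)$ over $\Omega^c$ then yields a fully-principal term together with terms containing at least one localized remainder.

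\emph{Off the exceptional set.} The fully-principal term is controlled by a quantitative multilinear weighted strong estimate for $T$ (available because $v\in A_1\subset A_q$ for $q$ slightly larger than $1$, via the multilinear $A_2$-type theorem) combined with the $\mu$-bound on the principal pieces; Chebyshev with exponent $1/m$ puts it in the required form. For a term with a localized remainder in the $i_0$-th slot, supported near $Q_k$, one works pointwise on $\Omega^c$ and uses the cancellation together with the kernel smoothness of $T$ to win a factor $(\ell(Q_k)/d(x,Q_k))^{\delta}$; the genuinely new feature relative to the unweighted Theorem~3 is that the weight factor $v^{(1-m)/m}$ occupying the $i_0$-th slot is not constant on $Q_k$, and its oscillation must be absorbed by the sharp reverse H\"older inequality for $v\in A_1$ (equivalently, the $RH_\infty$ property of $v^{-1}$), costing a controlled power of $[v]_{A_1}$ at each cube. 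After summing over the Whitney cubes, and, exactly as in Grafakos--Torres, treating the region far from all supports by the pointwise decay of the kernel of $T$ (this is what preserves the homogeneity $t^{-1/m}$, rather than $t^{-1}$, for small $t$) paired with the growth of $v$, one optimizes $\mu$ and totals the powers of $[v]_{A_1}$ coming from the weighted strong estimate for $T$, the reverse H\"older absorptions, the doubling constants of $v\,dx$ and $u\,dx$, and the choice of $\mu$; this yields the exponent $2m^2+2m-2$, which for $m=1$ reduces to the bound $[w]_{A_1}^{2}$ obtained by replacing the sharp $\log$ of Theorem~2 by a clean polynomial.

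\emph{Main obstacle, and the alternative.} I expect the remainder terms to be the crux, for two intertwined reasons: the factor $v^{(1-m)/m}$ inside $T$ destroys the flatness that the mean-zero cancellation exploits, so each use of that cancellation must be paired with a quantitative reverse H\"older estimate whose powers of $[v]_{A_1}$ need tracking; and the multilinear far-field phenomenon of the unweighted endpoint estimate must be re-run in the presence of $v$. As an alternative one runs the Nazarov--Treil--Volberg scheme: keep the same $L^1(v)$-bounded function $\Phi$, pass to its stopping cubes, bound the stopping contribution by the weak $(1,1)$ estimate for $M_v$, and treat the exit term by a Cotlar-type (doubling-free) inequality for $T$ in the spirit of the Nazarov--Treil--Volberg proofs of the linear weighted and of the unweighted multilinear endpoint estimates; this trades the kernel-smoothness bookkeeping for a stopping-time bookkeeping, with the same powers of $[v_{\vec w}]_{A_1}$ emerging from the weighted maximal bounds.
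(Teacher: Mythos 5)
There is a genuine gap in the central construction. The ``difficulty'' you identify---that each $g_i:=f_iw_iv^{(1-m)/m}$ is only known to lie in $L^1(u)$ with $u:=v^{(m-1)/m}$---is an artifact of the normalization you chose, and the paper avoids it by a different substitution: write the operator argument as $h_iv^{1/m}$ with $h_i:=f_iw_iv^{-1}$, and note that $\int|h_i|\,v=\|f_i\|_{L^1(w_i)}$, so each $h_i$ \emph{is} in $L^1(v)$. One then runs $m$ independent Calder\'on--Zygmund decompositions (or, in the Nazarov--Treil--Volberg version, $m$ independent level sets of $M_v$) of the $h_i$ at height $t^{1/m}$ against $v\,dx$, exactly as in the unweighted Grafakos--Torres scheme, obtaining per-slot $L^\infty(\mathbb{R}^n)$ bounds on $g_i$ and per-slot $v$-mean-zero bad parts $b_{i,j}$ supported on Whitney cubes $Q_{i,j}$. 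Your single stopping set $\Omega=\{M_v\Phi>\mu\}$ built from the combined function $\Phi=(\prod_i|g_i|)^{1/m}v^{-1/m}=(\prod_i|h_i|)^{1/m}$ does not produce this structure: on $\Omega^c$ the bound $M_v\Phi\le\mu$ controls only the $v$-average of the geometric mean $(\prod_i|h_i|)^{1/m}$, which gives neither an $L^\infty$ bound on any individual $h_i$ nor a means to extract, on each Whitney cube, a piece of $h_i$ that is $v$-mean zero. The per-slot cancellation is precisely what drives the weighted multilinear H\"ormander condition (the paper's Lemma~4, applied after subtracting $K(x,\vv{c}_{(1,j_1),(l,j_l)},\vv{y}_{l+1,m})$ in the bad slots); without it the ``localized remainder'' terms cannot win the $(\ell(Q_k)/d(x,Q_k))^\delta$ decay, and the argument does not close. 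Trying to repair this by stopping on $\max_iM_vh_i$ instead simply recovers the paper's $m$-fold decomposition.

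Two smaller inaccuracies in the sketch: the paper absorbs the oscillation of $v^{(1-m)/m}$ over a bad cube by the bare $A_1$ condition, via $\sup_{Q}v^{(1-m)/m}=(\inf_Qv)^{(1-m)/m}$ together with $\frac{1}{|Q|}\int_Qv\le[v]_{A_1}\inf_Qv$, not by a reverse H\"older inequality; and there is no separate ``far-field'' step to preserve the homogeneity $t^{-1/m}$---that homogeneity comes entirely from decomposing at height $t^{1/m}$, interpolating $\|g_i\|_{L^\infty}\lesssim t^{1/m}$ against $\|g_i\|_{L^1(v)}\le1$, and applying Chebyshev with exponent one after the $L^m\times\cdots\times L^m\to L^1$ strong bound (Theorem~5).
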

The first proof uses the Calder\'on-Zygmund decomposition and is a weighted version of the proof in \cite{PT2014}; the second proof uses the Nazarov-Treil-Volberg method and is a weighted version of the proof in \cite{SW2019}. See \cite{LOP2019} for a related result that is deduced using multilinear extrapolation.

\begin{remark}{1}  The second proof is actually a weighted version of a simplification of the proof in \cite{SW2019}. Referring to the contents of \cite{SW2019}, the current proof shows that the sets $E_{i,j}$ can be constructed as cubes, that the regularity of Lemma 1 is only required for collections of pairwise disjoint cubes, and that Theorem 2 is not necessary for the weak-type estimate.
\end{remark}

Section 2 describes the definitions and preliminary results, including a weighted version of the regularity condition first described for bilinear kernels in \cite{PT2014}. Section 3 contains two proofs of the main result, Theorem 4.

I would like to thank Brett Wick for his contributions to this article.


\section{Preliminaries}
We use the notation $A \lesssim B$ if there exists $C>0$, possibly depending on $n$, $T$, or $m$, such that $A \leq CB$. The Lebesgue measure of $A\subseteq \mathbb{R}^n$ is denoted by $|A|$, while for a weight $w$, $\int_Aw(x)dx$ is denoted by $w(A)$. The cube with center $x\in \mathbb{R}^n$ and side length $r$ is denoted by $Q(x,r)$. If $Q$ is a cube, then $rQ$ denotes the cube with the same center as $Q$ and side length equal to $r$ times the side length of $Q$.

Let $m$ be a positive integer. We say $K:\mathbb{R}^{n(m+1)}\setminus\{x= y_i \text{ for all } i \in \{1,\ldots,m\}\} \rightarrow \mathbb{C}$ is a \emph{$m$-multilinear Calder\'on-Zygmund kernel} if there exists $\delta >0$ such that the following conditions hold:
\begin{enumerate}
\item (\emph{size}) \[|K(x,y_1,\ldots,y_m)| \lesssim \frac{1}{\left(\sum_{i=1}^m|x-y_i|\right)^{nm}}\]
for all $x,y_1,\ldots,y_m \in \mathbb{R}^n$ with $x\neq y_j$ for some $j$,
\item (\emph{smoothness}) \[|K(x,y_1,\ldots,y_m)-K(x',y_1,\ldots,y_m)| \lesssim \frac{|x-x'|^{\delta}}{(\sum_{i=1}^m|x-y_i|)^{nm+\delta}}\] whenever $\displaystyle |x-x'| \leq \frac{1}{2}\max_{1\leq i\leq m}|x-y_i|$, and 
\[|K(x,y_1,\ldots,y_j,\ldots ,y_m)-K(x,y_1,\ldots, y_j', \ldots ,y_m)|\lesssim \frac{|y_j-y_j'|^{\delta}}{(\sum_{i=1}^m|x-y_i|)^{nm+\delta}}\] for each $j\in \{1,\ldots,m\}$ whenever $\displaystyle |y_j-y_j'| \leq \frac{1}{2}\max_{1\leq i\leq m}|x-y_i|$.
\end{enumerate} 
Let $\mathscr{S}(\mathbb{R}^n)$ denote the space of Schwartz functions on $\mathbb{R}^n$ and $\mathscr{S}'(\mathbb{R}^n)$ the space of tempered distributions on $\mathbb{R}^n$. We say that a $m$-multilinear operator $T:\mathscr{S}(\mathbb{R}^n)\times\cdots\times \mathscr{S}(\mathbb{R}^n) \rightarrow \mathscr{S}'(\mathbb{R}^n)$ is a \emph{multilinear Calder\'on-Zygmund operator associated to a kernel $K$} if $K$ is a $m$-multilinear Calder\'on-Zygmund kernel, if $T$ extends to a bounded operator from $L^{q_1}(\mathbb{R}^n)\times\cdots\times L^{q_m}(\mathbb{R}^n)$ to $L^q(\mathbb{R}^n)$ for some $\frac{1}{m} < q, q_1,\ldots,q_m < \infty$ satisfying $\frac{1}{q}=\frac{1}{q_1}+\cdots+\frac{1}{q_m}$, and if \[T(f_1,\ldots,f_m)(x) = \int_{(\mathbb{R}^n)^m} K(x,y_1,\ldots,y_m)f_1(y_1)\cdots f_m(y_m)dy_1\cdots dy_m\] for compactly supported integrable functions $f_i$ and almost every $x \in \mathbb{R}^n \setminus \bigcap_{i=1}^m\text{supp }f_i$. In all instances that follow, $T$ will represent an $m$-multilinear Calder\'on-Zygmund operator.

The following theorem was proved by Grafakos and Torres in \cite{GT2002}.
\begin{theorem}{5}
If $1< p_1,\ldots,p_m <\infty$, $p$ satisfies $\frac{1}{p}=\frac{1}{p_1}+\cdots+\frac{1}{p_m}$, and $f_i \in L^{p_i}(\mathbb{R}^n)$ for $i\in\{1,\ldots,m\}$, then $$\|T(f_1,\ldots,f_m)\|_{L^p(\mathbb{R}^n)}\lesssim\prod_{i=1}^m\|f_i\|_{L^{p_i}(\mathbb{R}^n)}.$$
\end{theorem}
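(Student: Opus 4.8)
The plan is to derive the full-range strong bound from the endpoint weak-type estimate of Theorem~3 by multilinear interpolation, exploiting that the transposes of $T$ are again multilinear Calder\'on-Zygmund operators. For $j\in\{1,\ldots,m\}$, let $T^{*j}$ denote the $j$-th transpose of $T$, characterized by $\langle T^{*j}(f_1,\ldots,f_m),g\rangle=\langle T(f_1,\ldots,f_{j-1},g,f_{j+1},\ldots,f_m),f_j\rangle$. A direct computation shows that $T^{*j}$ is associated to the kernel $K^{*j}(x,y_1,\ldots,y_m)=K(y_j,y_1,\ldots,y_{j-1},x,y_{j+1},\ldots,y_m)$, obtained by interchanging the roles of $x$ and $y_j$. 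Since $\sum_{i=1}^m|x-y_i|$ and $|x-y_j|+\sum_{i\neq j}|y_j-y_i|$ are comparable, with constant depending only on $m$ (by the triangle inequality), the size and smoothness conditions for $K$ transfer to $K^{*j}$, so each $K^{*j}$ is again a multilinear Calder\'on-Zygmund kernel with the same $\delta$. By duality, the a priori boundedness of $T$ on a tuple of Lebesgue spaces yields an a priori bound for $T^{*j}$ on a tuple of Lebesgue spaces (with the dual exponent inserted in the $j$-th slot), so each $T^{*j}$ is a multilinear Calder\'on-Zygmund operator. Theorem~3 then applies to $T$ and to every $T^{*j}$, giving $$\|T(f_1,\ldots,f_m)\|_{L^{1/m,\infty}}\lesssim\prod_{i=1}^m\|f_i\|_{L^1}\qquad\text{and}\qquad\|T^{*j}(f_1,\ldots,f_m)\|_{L^{1/m,\infty}}\lesssim\prod_{i=1}^m\|f_i\|_{L^1}.$$

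Next I would interpolate. From the given strong-type estimate for $T$ (equivalently, by duality, a strong-type estimate for each $T^{*j}$) together with the weak-type endpoint estimates just obtained for $T$ and all of its transposes, a multilinear (Marcinkiewicz-type) interpolation theorem—applied repeatedly and combined with duality to pass back and forth between $T$ and the $T^{*j}$—produces the strong-type bound $\|T(f_1,\ldots,f_m)\|_{L^p}\lesssim\prod_{i=1}^m\|f_i\|_{L^{p_i}}$ for every tuple with $1<p_1,\ldots,p_m<\infty$ and $\tfrac1p=\sum_{i=1}^m\tfrac1{p_i}$. This is the multilinear counterpart of the classical deduction that a linear Calder\'on-Zygmund operator is $L^p$-bounded for all $1<p<\infty$ once both it and its adjoint are of weak type $(1,1)$.

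The main obstacle is organizing the interpolation. Unlike the linear case, the target region is the genuinely $m$-dimensional set $\{1<p_i<\infty\}$, which is not reached by interpolating between just two estimates; one must feed in the endpoint estimates for all $m+1$ operators $T,T^{*1},\ldots,T^{*m}$ together with the given strong bound, and iterate the interpolation/duality step (or invoke a sufficiently general multilinear interpolation theorem) until every admissible tuple $(p_1,\ldots,p_m)$ is covered. The verifications that each $K^{*j}$ inherits the kernel conditions and that the a priori bound transfers to $T^{*j}$ are routine but must be carried out. The genuinely deep input, the endpoint estimate of Theorem~3 proved via the multilinear Calder\'on-Zygmund decomposition, is available to be cited.
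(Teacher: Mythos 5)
The paper does not prove Theorem~5; it simply cites Grafakos--Torres \cite{GT2002}, and your outline is precisely the argument given there: verify that the transposes $T^{*j}$ are again multilinear Calder\'on--Zygmund operators, apply the endpoint weak-type estimate to $T$ and all $T^{*j}$, and then use multilinear Marcinkiewicz interpolation together with duality to fill out the full range $1<p_i<\infty$ (including targets $L^p$ with $p<1$). Your proposal is a correct reconstruction of the standard proof from the cited source, so there is nothing to compare against within this paper itself.
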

We will use Theorem 5 in the proofs of Theorem 4 when $p_1=\cdots=p_m=m$ and $p=1$.

A characterization of the multilinear $A_{\vec{P}}$ condition in terms of linear $A_q$ conditions was established in \cite{LOPTTG2009}.
\begin{theorem}{6}
The following conditions are equivalent:
\begin{enumerate}
\item $\vec{w}\in A_{\vec{P}}$.
\item $v_{\vec{w}}\in A_{mp}$ and $w_i^{1-p_i'} \in A_{mp_i'}$ for all $i\in\{1,\ldots,m\}$. When $p_i=1$, the condition $w_i^{1-p_i'} \in A_{mp_i'}$ is understood as $w_i^{\frac{1}{m}} \in A_1$.
\end{enumerate}
\end{theorem}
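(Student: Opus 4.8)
The plan is to prove the two implications separately. The implication $(1)\Rightarrow(2)$ will follow from H\"older's inequality together with the dual form of the $A_q$ condition, $\frac{1}{|Q|}\int_Q w\leq[w]_{A_q}\left(\frac{1}{|Q|}\int_Q w^{1-q'}\right)^{1-q}$; the implication $(2)\Rightarrow(1)$ will combine this same dual form, applied to $v_{\vec{w}}\in A_{mp}$, with the reverse H\"older (equivalently, reverse Jensen) self-improvement enjoyed by the $A_\infty$ weights $w_i^{1-p_i'}$. Write $\sigma_i:=w_i^{1-p_i'}$; to fix ideas I assume $1<p_1,\ldots,p_m<\infty$, so that $1<mp<\infty$ and $mp_i'>1$, the endpoint cases $p_i\in\{1,\infty\}$ being treated at the end via the conventions in the statement. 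Everything rests on the pointwise identities $w_i^{p/p_i}=\sigma_i^{-p/p_i'}$, whence $v_{\vec{w}}=\prod_{i=1}^m\sigma_i^{-p/p_i'}$ and $v_{\vec{w}}^{1-(mp)'}=\prod_{i=1}^m\sigma_i^{\alpha_i}$ with $\alpha_i:=\frac{p}{p_i'(mp-1)}$; a direct check gives $\sum_{i=1}^m\alpha_i=1$ and $\alpha_i(mp-1)=p/p_i'$.

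For $(1)\Rightarrow(2)$: H\"older's inequality with exponents $\alpha_i^{-1}$ applied to $v_{\vec{w}}^{1-(mp)'}=\prod_i\sigma_i^{\alpha_i}$ gives $\frac{1}{|Q|}\int_Q v_{\vec{w}}^{1-(mp)'}\leq\prod_i\left(\frac{1}{|Q|}\int_Q\sigma_i\right)^{\alpha_i}$; raising this to the power $mp-1$, multiplying by $\frac{1}{|Q|}\int_Q v_{\vec{w}}$, and using $\alpha_i(mp-1)=p/p_i'$ yields $\left(\frac{1}{|Q|}\int_Q v_{\vec{w}}\right)\left(\frac{1}{|Q|}\int_Q v_{\vec{w}}^{1-(mp)'}\right)^{mp-1}\leq[\vec{w}]_{A_{\vec{P}}}^{p}$, i.e.\ $v_{\vec{w}}\in A_{mp}$. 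For $\sigma_j\in A_{mp_j'}$, the identity $w_j^{p/p_j}=v_{\vec{w}}\prod_{i\neq j}\sigma_i^{p/p_i'}$ lets one express $\sigma_j^{1-(mp_j')'}$ as $v_{\vec{w}}^{a_0}\prod_{i\neq j}\sigma_i^{a_i}$ with $a_0=\frac{p_j'}{p(mp_j'-1)}$ and $a_i=\frac{p_j'}{p_i'(mp_j'-1)}$; since $\frac{1}{p}+\sum_{i\neq j}\frac{1}{p_i'}=\frac{mp_j'-1}{p_j'}$ one has $a_0+\sum_{i\neq j}a_i=1$, so H\"older with exponents $a_0^{-1},a_i^{-1}$ applies, and raising to the power $mp_j'-1$ (using $a_0(mp_j'-1)=p_j'/p$ and $a_i(mp_j'-1)=p_j'/p_i'$) and multiplying by $\frac{1}{|Q|}\int_Q\sigma_j$ gives $\left(\frac{1}{|Q|}\int_Q\sigma_j\right)\left(\frac{1}{|Q|}\int_Q\sigma_j^{1-(mp_j')'}\right)^{mp_j'-1}\leq[\vec{w}]_{A_{\vec{P}}}^{p_j'}$.

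For $(2)\Rightarrow(1)$: the dual form of $v_{\vec{w}}\in A_{mp}$ reads $\frac{1}{|Q|}\int_Q v_{\vec{w}}\leq[v_{\vec{w}}]_{A_{mp}}\left(\frac{1}{|Q|}\int_Q\prod_i\sigma_i^{\alpha_i}\right)^{1-mp}$, so it suffices to bound $\frac{1}{|Q|}\int_Q\prod_i\sigma_i^{\alpha_i}$ from below by a fixed multiple of $\prod_i\left(\frac{1}{|Q|}\int_Q\sigma_i\right)^{\alpha_i}$ -- the reverse of the H\"older step just used. Here I use that each $\sigma_i\in A_{mp_i'}\subseteq A_\infty$, so $\sigma_i$ obeys a reverse H\"older inequality and, in particular, $\frac{1}{|Q|}\int_Q\sigma_i\leq C_i\exp\left(\frac{1}{|Q|}\int_Q\log\sigma_i\right)$ with $C_i$ depending only on $[\sigma_i]_{A_{mp_i'}}$; taking $\alpha_i$-th powers, multiplying over $i$, and invoking Jensen's inequality for $\log\prod_i\sigma_i^{\alpha_i}=\sum_i\alpha_i\log\sigma_i$ gives $\prod_i\left(\frac{1}{|Q|}\int_Q\sigma_i\right)^{\alpha_i}\leq\left(\prod_i C_i^{\alpha_i}\right)\frac{1}{|Q|}\int_Q\prod_i\sigma_i^{\alpha_i}$. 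Substituting this and again using $\alpha_i(mp-1)=p/p_i'$ produces $\left(\frac{1}{|Q|}\int_Q v_{\vec{w}}\right)\prod_i\left(\frac{1}{|Q|}\int_Q\sigma_i\right)^{p/p_i'}\leq[v_{\vec{w}}]_{A_{mp}}\left(\prod_i C_i^{\alpha_i}\right)^{mp-1}$, that is, $[\vec{w}]_{A_{\vec{P}}}<\infty$.

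The crux is the reverse direction: plain H\"older only gives $\frac{1}{|Q|}\int_Q\prod_i\sigma_i^{\alpha_i}\leq\prod_i\left(\frac{1}{|Q|}\int_Q\sigma_i\right)^{\alpha_i}$, the \emph{wrong} inequality, and no reverse bound can hold without quantitative regularity on the $\sigma_i$ (e.g.\ if they concentrate on disjoint subcubes of $Q$). The bridge -- that a Muckenhoupt $A_p$ weight lies in $A_\infty$ and hence satisfies a reverse H\"older, equivalently a reverse Jensen, inequality -- is classical weight theory, which I would cite rather than reprove. The one remaining matter is the bookkeeping for $p_i=1$, where $\frac{1}{|Q|}\int_Q\sigma_i$ is replaced by $(\inf_Q w_i)^{-1}$ and $A_{mp_i'}$ by the condition $w_i^{1/m}\in A_1$, and for $p_i=\infty$, where $\sigma_i\equiv1$ and the $i$-th factor drops out; these follow from the inequalities above by passing to the limit, the $(mp)'$- and $(mp_i')'$-averages becoming infima or exponential averages. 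In particular, when $p_1=\cdots=p_m=1$ -- the case relevant to Theorem 4 -- the assertion becomes that $\sup_Q\left(\frac{1}{|Q|}\int_Q\prod_i w_i^{1/m}\right)^m\prod_i(\inf_Q w_i)^{-1}<\infty$ is equivalent to $\prod_i w_i^{1/m}\in A_1$ together with $w_i^{1/m}\in A_1$ for every $i$, obtained by the same two arguments with infima and exponential averages in place of the $(mp)'$-averages.
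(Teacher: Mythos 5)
Your argument is correct, and it is essentially the proof of this characterization given in the cited reference \cite{LOPTTG2009} (the paper itself states Theorem 6 without proof). The forward direction via H\"older's inequality with the exponents $\alpha_i^{-1}$ and $a_i^{-1}$ coming from the identities $v_{\vec{w}}^{1-(mp)'}=\prod_i\sigma_i^{\alpha_i}$ and $\sigma_j^{1-(mp_j')'}=v_{\vec{w}}^{a_0}\prod_{i\neq j}\sigma_i^{a_i}$, and the reverse direction via the $A_\infty$ reverse-Jensen bound $\frac{1}{|Q|}\int_Q\sigma_i\leq[\sigma_i]_{A_{mp_i'}}\exp\left(\frac{1}{|Q|}\int_Q\log\sigma_i\right)$, are exactly the standard route; moreover your constants reproduce precisely the quantitative relationships recorded in Remark 2, including the all-$p_i=1$ case actually used in Theorem 4.
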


\begin{remark}{2}
Tracking down the estimates in the proof of Theorem 6 gives the relationships $$[v_{\vec{w}}]_{A_{mp}}\leq[\vec{w}]_{A_{\vec{P}}}^p,$$ $$\left[w_i^{1-p_i'}\right]_{A_{mp_i'}}\leq [\vec{w}]_{A_{\vec{P}}}^{p_i'} \,\, \text{when } p_i>1,$$ $$\left[w_i^{\frac{1}{m}}\right]_{A_{1}}\leq [\vec{w}]_{A_{\vec{P}}}^{\frac{1}{m}} \,\, \text{when } p_i=1, \,\, \text{and}$$ $$[\vec{w}]_{A_{\vec{P}}}\leq [v_{\vec{w}}]_{A_{mp}}^{\frac{1}{p}}\prod_{i=1}^m\left[w_i^{1-p_i'}\right]_{A_{mp_i'}}^{\frac{1}{p_i'}},$$ where $\left[w_i^{1-p_i'}\right]_{A_{mp_i'}}^{\frac{1}{p_i'}}$ is interpreted as $\left[w_i^{\frac{1}{m}}\right]_{A_1}^m$ when $p_i=1$.
\end{remark}

We will use the following property of $A_1$ weights. 
\begin{lemma}{1}
If $w \in A_1$ and $0\leq \gamma\leq 1$, then $w^{\gamma}\in A_1$ with $[w^{\gamma}]_{A_1}\leq[w]_{A_1}^{\gamma}$.
\end{lemma}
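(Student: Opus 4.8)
The plan is to verify the $A_1$ condition directly from its definition, using only two elementary properties of the map $s \mapsto s^{\gamma}$ on $[0,\infty)$: it is nondecreasing, and for $0 \le \gamma \le 1$ it is concave, so that Jensen's inequality applies with respect to the normalized Lebesgue measure on a cube.

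First I would dispose of the degenerate case $\gamma = 0$: then $w^{\gamma} \equiv 1$, which lies in $A_1$ with $[1]_{A_1} = 1 = [w]_{A_1}^{0}$. So assume $0 < \gamma \le 1$ and fix an arbitrary cube $Q$. Since $s \mapsto s^{\gamma}$ is nondecreasing, the essential infima satisfy $\inf_Q w^{\gamma} = \left(\inf_Q w\right)^{\gamma}$. Since $s \mapsto s^{\gamma}$ is concave and $|Q|^{-1}\,dx$ is a probability measure on $Q$, Jensen's inequality yields
\[\frac{1}{|Q|}\int_Q w^{\gamma} \le \left(\frac{1}{|Q|}\int_Q w\right)^{\gamma}.\]
In particular $w^{\gamma}$ is locally integrable, and it is positive almost everywhere because $w$ is, so $w^{\gamma}$ is a legitimate weight.

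Combining these two facts, I would estimate the $A_1$ ratio over $Q$:
\[\left(\frac{1}{|Q|}\int_Q w^{\gamma}\right)\left(\inf_Q w^{\gamma}\right)^{-1} \le \left(\frac{1}{|Q|}\int_Q w\right)^{\gamma}\left(\inf_Q w\right)^{-\gamma} = \left[\left(\frac{1}{|Q|}\int_Q w\right)\left(\inf_Q w\right)^{-1}\right]^{\gamma} \le [w]_{A_1}^{\gamma}.\]
Taking the supremum over all cubes $Q$ then gives $w^{\gamma} \in A_1$ with $[w^{\gamma}]_{A_1} \le [w]_{A_1}^{\gamma}$, as desired.

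There is essentially no serious obstacle here; the only points that deserve a word of care are the identification $\inf_Q w^{\gamma} = \left(\inf_Q w\right)^{\gamma}$ for essential infima (which uses monotonicity together with the fact that a set of measure zero is negligible on both sides) and the observation that the Jensen estimate already guarantees local integrability of $w^{\gamma}$, so no separate argument for that is needed.
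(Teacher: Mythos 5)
Your proof is correct and follows essentially the same approach as the paper: both establish the key inequality $\frac{1}{|Q|}\int_Q w^{\gamma} \le \left(\frac{1}{|Q|}\int_Q w\right)^{\gamma}$ and then invoke the $A_1$ condition of $w$. The paper derives this inequality from H\"older's inequality with exponents $\frac{1}{\gamma}$ and $\left(\frac{1}{\gamma}\right)'$, while you invoke Jensen's inequality for the concave map $s\mapsto s^{\gamma}$; these are two standard phrasings of the same power mean inequality, so the difference is purely presentational.
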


\begin{proof}
The cases when $\gamma=0$ and $\gamma=1$ are clear. If $0<\gamma<1$, then $\frac{1}{\gamma}>1$. Applying H\"older's inequality and the $A_1$ condition of $w$ gives $$\frac{1}{|Q|}\int_Qw(y)^{\gamma}dy \leq 
\left(\frac{1}{|Q|}\int_Qw(y)dy\right)^{\gamma}
\left(\frac{1}{|Q|}\int_Q1^{\left(\frac{1}{\gamma}\right)'}dy\right)^{\frac{1}{\left(\frac{1}{\gamma}\right)'}}\leq [w]_{A_1}^{\gamma}\left(\inf_Q w^{\gamma}\right).$$
\end{proof}

We will use the following maximal function in the second proof of the main theorem in Section 3. Given a weight $w$, define the \emph{uncentered maximal function associated to $w$} by $$M_wf(x):=\sup_{Q\ni x}\frac{1}{w(Q)}\int_Q|f(y)|w(y)dy.$$
\begin{lemma}{2}
If $w \in A_p$ and $f\in L^1(w)$, then $$\|M_wf\|_{L^{1,\infty}(w)}\lesssim \|f\|_{L^1(w)}.$$
\end{lemma}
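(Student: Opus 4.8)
The plan is to use the $A_p$ hypothesis only to deduce that $w\,dx$ is a doubling measure, and then to run the standard Vitali-covering proof of the weak-type bound for the uncentered Hardy--Littlewood maximal operator, now carried out with respect to $w$ in place of Lebesgue measure.

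First I would record the doubling property in quantitative form. Applying H\"older's inequality and the $A_p$ condition (with the usual $\inf$ convention when $p=1$), one obtains, for every cube $Q$ and every measurable set $E\subseteq Q$,
$$\left(\frac{|E|}{|Q|}\right)^p\le[w]_{A_p}\,\frac{w(E)}{w(Q)};$$
taking $E=Q$ inside the cube $3Q$ then gives $w(3Q)\le 3^{np}[w]_{A_p}\,w(Q)$. Next, fix $t>0$ and set $E_t=\{x:M_wf(x)>t\}$; for each $x\in E_t$ choose a cube $Q_x\ni x$ with $\int_{Q_x}|f|w>t\,w(Q_x)$. Since $w\,dx$ is a Radon measure it is inner regular, so it is enough to bound $w(K)$, uniformly, for an arbitrary compact set $K\subseteq E_t$. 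Pick a finite subcover $K\subseteq Q_{x_1}\cup\cdots\cup Q_{x_N}$ and apply the elementary finite Vitali selection lemma for cubes --- greedily choose cubes of largest side length among those disjoint from the ones already chosen --- to extract a pairwise disjoint subfamily $Q_1,\ldots,Q_M$ with $\bigcup_{j=1}^N Q_{x_j}\subseteq\bigcup_{k=1}^M 3Q_k$; here one uses that if two cubes meet and the side length of one is at most that of the other, then the smaller cube lies in the threefold dilate of the larger. Then
$$w(K)\le\sum_{k=1}^M w(3Q_k)\le 3^{np}[w]_{A_p}\sum_{k=1}^M w(Q_k)\le\frac{3^{np}[w]_{A_p}}{t}\sum_{k=1}^M\int_{Q_k}|f|w\le\frac{3^{np}[w]_{A_p}}{t}\|f\|_{L^1(w)},$$
the disjointness of the $Q_k$ being used in the last inequality. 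Taking the supremum over compact $K\subseteq E_t$ and then over $t>0$ gives $\|M_wf\|_{L^{1,\infty}(w)}\le 3^{np}[w]_{A_p}\|f\|_{L^1(w)}$.

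I do not expect a genuine obstacle. The two steps that need a little care are the passage from the $A_p$ condition to the quantitative doubling inequality, and the reduction to compact subsets of $E_t$: this last reduction is exactly what lets the covering argument operate with only finitely many cubes at once, sidestepping any difficulty coming from the cubes $Q_x$ possibly having arbitrarily large side lengths. The measurability of $M_wf$ --- for instance by lower semicontinuity, if the cubes in the defining supremum are taken to be open --- and the inner regularity of $w\,dx$ are routine.
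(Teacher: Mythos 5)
The paper states Lemma 2 without proof, so there is no paper argument to compare against directly; your Vitali covering argument is a correct and standard way to get a weak $(1,1)$ bound for $M_w$, and the preliminary inequality $(|E|/|Q|)^p\le[w]_{A_p}\,w(E)/w(Q)$ is right. However, there is a quantitative gap between what you prove and what the paper asserts: your final bound is $3^{np}[w]_{A_p}\|f\|_{L^1(w)}$, a constant that \emph{does} depend on the $A_p$ characteristic (and on $p$). Under the paper's convention, the implied constant in $\lesssim$ is allowed to depend only on $n$, $T$, and $m$, and the sentence immediately after the lemma makes this explicit: ``The operator norm of $M_w$ does not depend on the $A_p$ characteristic of $w$.'' Your route cannot produce such a bound, because the only place the $A_p$ hypothesis enters is through the doubling inequality $w(3Q)\le 3^{np}[w]_{A_p}w(Q)$, and the Vitali dilation step unavoidably pays that doubling factor.

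To obtain a constant that is uniform in $w$, one should replace the Vitali covering lemma by a Besicovitch-type covering theorem, which yields a bounded-overlap subfamily without any dilation and hence never needs to compare $w(3Q)$ to $w(Q)$; the resulting weak $(1,1)$ constant is a pure dimensional constant, valid for any Radon measure, not just $A_p$ weights. The catch is that the Besicovitch covering theorem as usually stated applies to cubes (or balls) \emph{centered} at the points being covered, whereas the $M_w$ in the paper is the uncentered supremum. Either one passes to the centered maximal function (which suffices for the application in Proof 2, where Lemma 2 is used only to get $v_{\vec w}(\Omega_i)\lesssim t^{-1/m}$), or one invokes a Besicovitch-type result adapted to non-centered cubes. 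In short: your proof is correct but establishes a strictly weaker, $[w]_{A_p}$-dependent version of the lemma; the missing idea for the version actually claimed is to avoid doubling altogether by using a Besicovitch covering rather than a Vitali covering.
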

The operator norm of $M_w$ does not depend on the $A_p$ characteristic of $w$.

The following lemma is well-known and proved in \cite{Grafakos1,Grafakos2,Stein}. 
\begin{lemma}{3}
Let $k:[0,\infty)\rightarrow [0,\infty)$ be decreasing and continuous except at a finite number of points. If $K(x)=k(|x|)$ is in $L^1(\mathbb{R}^n)$, then for all $f \in L_{\text{loc}}^1(\mathbb{R}^n)$, $$(|f|*K)(x)\leq \|K\|_{L^1(\mathbb{R}^n)}M(f)(x),$$
where $M$ denotes the classical Hardy-Littlewood maximal operator.
\end{lemma}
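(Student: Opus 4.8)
The plan is to use the layer-cake (distribution function) representation of the radial kernel and to reduce the estimate to the definition of $M$. Since $k$ is decreasing, for each $\lambda>0$ the superlevel set $\{t\ge 0:k(t)>\lambda\}$ is an interval of the form $[0,r(\lambda))$ or $[0,r(\lambda)]$, where $r(\lambda):=\sup\{t\ge 0:k(t)>\lambda\}$; because $K\in L^1(\mathbb{R}^n)$ we have $r(\lambda)<\infty$ for every $\lambda>0$. Consequently $\{x\in\mathbb{R}^n:K(x)>\lambda\}$ equals the ball $B(0,r(\lambda))$ up to a Lebesgue-null set (part of the sphere of radius $r(\lambda)$), and the elementary identity $K(x)=\int_0^\infty\mathbf{1}_{\{K>\lambda\}}(x)\,d\lambda$ becomes
$$K(x)=\int_0^\infty\mathbf{1}_{B(0,r(\lambda))}(x)\,d\lambda$$
for a.e.\ $x$. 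The hypothesis that $k$ is continuous except at finitely many points guarantees that this rewriting holds for a.e.\ $x$ (the only points one must exclude are those whose distance to the origin is one of the finitely many jump points of $k$); in fact this hypothesis could be relaxed, since a decreasing function has at most countably many discontinuities.

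Next I would substitute this representation into the convolution and apply Tonelli's theorem, which is legitimate because every integrand is nonnegative:
$$(|f|*K)(x)=\int_{\mathbb{R}^n}|f(x-y)|\left(\int_0^\infty\mathbf{1}_{B(0,r(\lambda))}(y)\,d\lambda\right)dy=\int_0^\infty\left(\int_{B(x,r(\lambda))}|f(z)|\,dz\right)d\lambda,$$
where the last equality uses the change of variables $z=x-y$. For each fixed $\lambda$ the inner integral is an (unnormalized) average over the ball $B(x,r(\lambda))$ centered at $x$, so the definition of the Hardy-Littlewood maximal operator gives $\int_{B(x,r(\lambda))}|f(z)|\,dz\le|B(x,r(\lambda))|\,Mf(x)=|B(0,r(\lambda))|\,Mf(x)$.

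Finally I would factor $Mf(x)$ out of the $\lambda$-integral and identify the constant that remains: by Tonelli once more,
$$\int_0^\infty|B(0,r(\lambda))|\,d\lambda=\int_{\mathbb{R}^n}\int_0^\infty\mathbf{1}_{B(0,r(\lambda))}(y)\,d\lambda\,dy=\int_{\mathbb{R}^n}K(y)\,dy=\|K\|_{L^1(\mathbb{R}^n)},$$
which yields exactly $(|f|*K)(x)\le\|K\|_{L^1(\mathbb{R}^n)}Mf(x)$. There is no genuine obstacle in this argument; the only steps that warrant a word of care are the bookkeeping of the null sets at the jump points of $k$ in the first paragraph, and the observation that the balls $B(0,r(\lambda))$ produced by the layer-cake decomposition become, after recentering at $x$, precisely the balls over which $M$ takes averages — both of which are routine.
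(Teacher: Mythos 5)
Your proof is correct. The paper itself gives no argument for Lemma 3 (it only cites \cite{Grafakos1,Grafakos2,Stein}), and your layer-cake representation $K(x)=\int_0^\infty\mathbf{1}_{B(0,r(\lambda))}(x)\,d\lambda$ followed by Tonelli is just a continuous reformulation of the standard approximation-by-simple-radial-functions argument in those references; the null-set bookkeeping and the identification of the constant as $\|K\|_{L^1(\mathbb{R}^n)}$ are handled correctly, and the ball averages you use match the centered Hardy--Littlewood maximal function, which is what the stated constant requires.
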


The following lemma is a weighted version of the multilinear geometric H\"ormander condition first introduced in the bilinear setting in \cite{PT2014} and generalized in the multilinear setting in \cite{SW2019}. We use the following vector notations $\vv{y}_{i,k}=(y_i,y_{i+1},\ldots,y_k)$ and $\vv{c}_{(i,j_i),(k,j_k)}=(c_{i,j_i},c_{i+1,j_{i+1}},\ldots,c_{k,j_k})$.

\begin{lemma}{4}
If $w\in A_{1}$, $l\in\{1,\ldots,m\},$ and each of  $\mathcal{Q}_1,\ldots,\mathcal{Q}_l$ consists of pairwise disjoint cubes  $\mathcal{Q}_i=\{Q_{i,1},Q_{i,2},\ldots\}$ where $Q_{i,j}=Q(c_{i,j},r_{i,j})$, then
\begin{align*}
&\sum_{j_1,\ldots,j_l=1}^{\infty}\prod_{i=1}^lw^{\frac{1}{m}}(Q_{i,j_i})\int_{\mathbb{R}^{n(m-l)}}\prod_{i=l+1}^{m}w(y_i)^{\frac{1}{m}}\\
&\quad\quad\times\sup_{\substack{(y_1,\ldots,y_l) \\\in \prod_{i=1}^lQ_{i,j_i}}} \int_{\mathbb{R}^n\setminus\left(\bigcup_{i=1}^l \Omega_{i}^{*}\right)} |K(x,\vv{y}_{1,m})-K(x,\vv{c}_{(1,j_1),(l,j_l)},\vv{y}_{l+1,m})|dxd\vv{y}_{l+1,m}\\
&\lesssim [w]_{A_{1}}^{\frac{2m-2}{m}}\sum_{i=1}^{l} w\left(\Omega_{i}\right)
\end{align*} 
where $\Omega_i:= \bigcup_{j=1}^{\infty}Q_{i,j}$ and $\Omega_i^{*}:= \bigcup_{j=1}^{\infty}2\sqrt{n}Q_{i,j}$.
\end{lemma}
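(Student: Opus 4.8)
The plan is to induct on $l$, peeling off one layer of cube-sums at a time and reducing the exponent on $\max_{1\le i\le l}|x-y_i|$ via the smoothness condition. First I would set up the base case $l=1$: here the supremum is over $y_1\in Q_{1,j_1}$, and on the region $\mathbb{R}^n\setminus\Omega_1^*$ we have $|x-c_{1,j_1}|\ge 2\sqrt n\,r_{1,j_1}\ge 2|y_1-c_{1,j_1}|$ (since the diameter of $Q_{1,j_1}$ is $\sqrt n\,r_{1,j_1}$), so the first smoothness estimate applies with the roles of $x,x'$ replaced by the first entry; this gives a pointwise bound $|K(x,\vv y_{1,m})-K(x,c_{1,j_1},\vv y_{2,m})|\lesssim r_{1,j_1}^\delta(\sum_{i}|x-y_i|)^{-nm-\delta}$. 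Integrating in $x$ over $|x-c_{1,j_1}|\ge 2\sqrt n r_{1,j_1}$ and in $\vv y_{2,m}$ over $\mathbb{R}^{n(m-1)}$ against $\prod_{i=2}^m w(y_i)^{1/m}$ — estimating each $w(y_i)^{1/m}$ factor by passing to the maximal function via Lemma 3 after freezing the other variables, and using that $w^{1/m}\in A_1$ with $[w^{1/m}]_{A_1}\le[w]_{A_1}^{1/m}$ by Lemma 1 — should produce a bound of the form $[w]_{A_1}^{(m-1)/m}\,r_{1,j_1}^{-n}\,w^{(m-1)/m}$-type quantities evaluated near $c_{1,j_1}$. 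Multiplying by the remaining factor $w^{1/m}(Q_{1,j_1})$ and summing over $j_1$, the disjointness of $\mathcal Q_1$ collapses the sum into $w(\Omega_1)$ up to the constant $[w]_{A_1}^{(2m-2)/m}$.

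For the inductive step, I would apply the triangle inequality to write
\[
K(x,\vv y_{1,m})-K(x,\vv c_{(1,j_1),(l,j_l)},\vv y_{l+1,m})
\]
as a telescoping sum over swapping $y_l\mapsto c_{l,j_l}$, then $y_{l-1}\mapsto c_{l-1,j_{l-1}}$, and so on — or, more efficiently, just swap the $l$-th variable and keep the rest, reducing to the $(l-1)$-case plus an error term controlled exactly as in the base case but now integrating also over the disjoint cubes $Q_{l,j_l}$ against $w^{1/m}(Q_{l,j_l})$. On the complementary region $\mathbb{R}^n\setminus(\bigcup_{i=1}^l\Omega_i^*)$ one has $|x-c_{l,j_l}|\ge 2\sqrt n\,r_{l,j_l}$, so the $j$-th smoothness estimate (with $j=l$) gives the $r_{l,j_l}^\delta$ gain; the $\delta$ in the denominator exponent is spent to make the $x$-integral converge, and the sum over $j_l$ again telescopes using disjointness of $\mathcal Q_l$ to yield $w(\Omega_l)$. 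The $A_1$ constants accumulate: each of the $m-l$ "free" variables $y_i$, $i>l$, contributes one factor of $[w]_{A_1}^{1/m}$ through Lemma 1 and Lemma 3, and handling the interaction between the swapped cube and the weight contributes the rest, so that the total never exceeds $[w]_{A_1}^{(2m-2)/m}$.

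The main obstacle I anticipate is the bookkeeping in the multivariable integral $\int_{\mathbb{R}^{n(m-l)}}\prod_{i=l+1}^m w(y_i)^{1/m}(\cdots)\,d\vv y_{l+1,m}$: one must integrate the kernel difference — which after the size/smoothness bound looks like $r^\delta(\sum_i|x-y_i|)^{-nm-\delta}$ — successively in $y_{l+1},\dots,y_m$ and in $x$, at each stage bounding $\int w(y_i)^{1/m}(\text{radial decreasing in }y_i)\,dy_i$ by $[w^{1/m}]_{A_1}$ times the weight evaluated on a ball and times the normalizing $L^1$-norm of the kernel slice (Lemma 3), while being careful that the remaining variables are frozen and that the geometric constraint $|x-c_{i,j_i}|\gtrsim r_{i,j_i}$ is what makes the final $x$-integral finite. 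Getting the exponent to come out as exactly $\frac{2m-2}{m}$ rather than something larger requires organizing these estimates so that the $l$ cube-layers themselves cost nothing beyond the $w(\Omega_i)$ terms and only the $m-l \le m-1$ free variables plus one extra swap-interaction each cost a power of $[w]_{A_1}^{1/m}$ — tracking this carefully is the crux.
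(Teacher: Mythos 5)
Your overall route matches the paper's: apply the kernel smoothness to pick up an $r_{i,j_i}^\delta$ gain, trade that against a $|x-y_i|^\delta$ in the denominator, and then integrate out the remaining variables one at a time using Lemma~3 (radial majorant $\Rightarrow$ maximal function) together with the pointwise bound $M(w^{\gamma})\le[w^{\gamma}]_{A_1}w^{\gamma}$ and Lemma~1, finally summing over the disjoint cubes to produce $w(\Omega_i)$. The only structural difference is that you propose an induction on $l$ (swap out the $l$-th cube variable and reduce), whereas the paper handles all $l$ layers at once: it uses the smoothness bound $\sum_{i=1}^l r_{i,j_i}^\delta/(\sum|x-y_i|)^{nm+\delta}$, converts the supremum to an infimum (and hence to an average) via a direct comparison of $|x-y_i^*|$ and $|x-y_{i_*}|$ for $x$ outside $\Omega_i^*$, and then splits the sum over $(j_1,\ldots,j_l)$ according to which $r_{k,j_k}$ is maximal, so that for each $k$ the numerator collapses to $r_{k,j_k}^\delta$ and the constraints $y_i\in Q_{i,j_i}$ for $i\neq k$ can simply be dropped. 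These are essentially equivalent in substance.

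There are, however, two real gaps you should address. First, your constant accounting is incorrect as stated: you write that ``each of the $m-l$ free variables $y_i$, $i>l$, contributes one factor of $[w]_{A_1}^{1/m}$'' plus ``one extra swap-interaction,'' but that tally does not reach $[w]_{A_1}^{(2m-2)/m}$ in general (for instance $m=3$, $l=1$ gives only $[w]_{A_1}^{1}$ by your count, not $[w]_{A_1}^{4/3}$). In the paper, after choosing the ``dominant'' index $k$, \emph{all} $m-1$ remaining $y$-variables are integrated out---the $l-1$ cube variables (once their cube constraints are relaxed to $\mathbb{R}^n$) as well as the $m-l$ free ones---each producing a factor $[w^{1/m}]_{A_1}$, and the final $x$-integral contributes $[w^{(m-1)/m}]_{A_1}$; Lemma~1 then gives $[w^{1/m}]_{A_1}^{m-1}[w^{(m-1)/m}]_{A_1}\le[w]_{A_1}^{(2m-2)/m}$. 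Second, you never explain how the factor $\prod_{i=1}^l w^{1/m}(Q_{i,j_i})$ multiplied by a supremum over $\prod_i Q_{i,j_i}$ is converted into an integral $\int_{Q_{l,j_l}}\!\cdots\!\int_{Q_{1,j_1}}\prod w(y_i)^{1/m}(\cdots)\,d\vv{y}_{1,l}$. This requires the sup-to-inf comparison on the complement of the expanded cubes (where $|x-y_i|$ is comparable for all $y_i$ in the cube), which is precisely what justifies bringing the weight integrals inside and later invoking Lemma~3; in your inductive reduction the analogous step---replacing the discrete sum $\sum_{j_l}w^{1/m}(Q_{l,j_l})(\cdot)(c_{l,j_l})$ by $\int w(y_l)^{1/m}(\cdot)(y_l)\,dy_l$ in order to invoke the $(l-1)$-case---needs the same comparison and should be made explicit.

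%\end{document}
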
 
It is not important that the indices of the $\mathcal{Q}_i$ range from $1$ to $l$ -- a symmetric proof yields the lemma whenever the set of indices is a nonempty subset of $\{1,\ldots,m\}$.

\begin{proof} For $i=1,\ldots, l$, fix $Q_{i,j_i}\in \mathcal{Q}_i$. Use the smoothness condition of $K$ to see
\begin{align*} 
\sup_{\substack{(y_1,\ldots,y_l) \\\in \prod_{i=1}^lQ_{i,j_i}}} &\int_{\mathbb{R}^n\setminus\left(\bigcup_{i=1}^l \Omega_{i}^{*}\right)}|K(x,\vv{y}_{1,m})-K(x,\vv{c}_{(1,j_1),(l,j_l)},\vv{y}_{l+1,m})|dx\\
&\lesssim \sup_{\substack{(y_1,\ldots,y_l) \\\in \prod_{i=1}^lQ_{i,j_i}}} \int_{\mathbb{R}^n\setminus\left(\bigcup_{i=1}^l \Omega_{i}^{*}\right)} \frac{\sum_{i=1}^l|y_i-c_{i,j_i}|^{\delta}}{(\sum_{i=1}^m|x-y_i|)^{nm+\delta}}dx\\
&\lesssim \sup_{\substack{(y_1,\ldots,y_l) \\\in \prod_{i=1}^lQ_{i,j_i}}} \int_{\mathbb{R}^n\setminus\left(\bigcup_{i=1}^l \Omega_{i}^{*}\right)} \frac{\sum_{i=1}^lr_{i,j_i}^{\delta}}{(\sum_{i=1}^m|x-y_i|)^{nm+\delta}}dx.
\end{align*}
Since for fixed $y_i \in\overline{Q_{i,j_i}}$, $i=l+1,\ldots,m$, the function \\$\displaystyle\int_{\mathbb{R}^n\setminus\left(\bigcup_{i=1}^l \Omega_{i}^{*}\right)} \frac{\sum_{i=1}^lr_{i,j_i}^{\delta}}{(\sum_{i=1}^m|x-y_i|)^{nm+\delta}}dx$ is continuous in the variables $y_i\in \overline{Q_{i,j_i}}$, $i=1,\ldots, l$, we may write 

\begin{align*}
\sup_{\substack{(y_1,\ldots,y_l) \\\in \prod_{i=1}^lQ_{i,j_i}}} &\int_{\mathbb{R}^n\setminus\left(\bigcup_{i=1}^l \Omega_{i}^{*}\right)} \frac{\sum_{i=1}^lr_{i,j_i}^{\delta}}{(\sum_{i=1}^m|x-y_i|)^{nm+\delta}}dx\\
&=\int_{\mathbb{R}^n\setminus\left(\bigcup_{i=1}^l \Omega_{i}^{*}\right)} \frac{\sum_{i=1}^lr_{i,j_i}^{\delta}}{(\sum_{i=1}^l|x-y_i^*|+\sum_{i=l+1}^m|x-y_i|)^{nm+\delta}}dx,
\end{align*}
and
\begin{align*}
\inf_{\substack{(y_1,\ldots,y_l) \\\in \prod_{i=1}^lQ_{i,j_i}}} &\int_{\mathbb{R}^n\setminus\left(\bigcup_{i=1}^l \Omega_{i}^{*}\right)} \frac{\sum_{i=1}^lr_{i,j_i}^{\delta}}{(\sum_{i=1}^m|x-y_i|)^{nm+\delta}}dx\\
&=\int_{\mathbb{R}^n\setminus\left(\bigcup_{i=1}^l \Omega_{i}^{*}\right)} \frac{\sum_{i=1}^lr_{i,j_i}^{\delta}}{(\sum_{i=1}^l|x-y_{i_*}|+\sum_{i=l+1}^m|x-y_i|)^{nm+\delta}}dx.
\end{align*}
Note that for $x\in\mathbb{R}^n\setminus\left(\bigcup_{i=1}^l \Omega_{i}^{*}\right)$, $|x-y_{i_*}|\leq \sqrt{n}r_{i,j_i}+|x-y_i^*|$ and $|x-y_i^*|\ge \frac{1}{2}r_{i,j_i}$, so \[\frac{\sum_{i=1}^l|x-y_{i_*}|+\sum_{i=l+1}^m|x-y_i|}{\sum_{i=1}^l|x-y_i^*|+\sum_{i=l+1}^m|x-y_i|}\leq \frac{\sum_{i=1}^l\sqrt{n}r_{i,j_i}}{\sum_{i=1}^l|x-y_i^*|+\sum_{i=l+1}^m|x-y_i|}+1\leq 2\sqrt{n}+1.\]
Then
\begin{align*}
&\sup_{\substack{(y_1,\ldots,y_l) \\\in \prod_{i=1}^lQ_{i,j_i}}} \int_{\mathbb{R}^n\setminus\left(\bigcup_{i=1}^l \Omega_{i}^{*}\right)}|K(x,\vv{y}_{1,m})-K(x,\vv{c}_{(1,j_1),(l,j_l)},\vv{y}_{l+1,m})|dx\\
&\quad\lesssim \int_{\mathbb{R}^n\setminus\left(\bigcup_{i=1}^l \Omega_{i}^{*}\right)} \frac{\sum_{i=1}^lr_{i,j_i}^{\delta}}{(\sum_{i=1}^l|x-y_{i_*}|+\sum_{i=l+1}^m|x-y_{i}|)^{nm+\delta}}\\
&\quad\quad\times\left(\frac{\sum_{i=1}^l|x-y_{i_*}|+\sum_{i=l+1}^m|x-y_i|}{\sum_{i=1}^l|x-y_i^*|+\sum_{i=l+1}^m|x-y_i|}\right)^{nm+\delta}dx\\
&\quad\lesssim \int_{\mathbb{R}^n\setminus\left(\bigcup_{i=1}^l \Omega_{i}^{*}\right)} \frac{\sum_{i=1}^lr_{i,j_i}^{\delta}}{(\sum_{i=1}^l|x-y_{i_*}|+\sum_{i=l+1}^m|x-y_i|)^{nm+\delta}}dx\\
&\quad=\inf_{\substack{(y_1,\ldots,y_l) \\\in \prod_{i=1}^lQ_{i,j_i}}} \int_{\mathbb{R}^n\setminus\left(\bigcup_{i=1}^l \Omega_{i}^{*}\right)} \frac{\sum_{i=1}^lr_{i,j_i}^{\delta}}{(\sum_{i=1}^m|x-y_{i}|)^{nm+\delta}}dx.
\end{align*}

Using the previous estimate, Fubini's theorem, and trivial estimates, we get the bound
\begin{align*}
&\sum_{j_1,\ldots,j_l=1}^{\infty}\prod_{i=1}^lw^{\frac{1}{m}}(Q_{i,j_i})\int_{\mathbb{R}^{n(m-l)}}\prod_{i=l+1}^{m}w(y_i)^{\frac{1}{m}}\\
&\quad\quad\times\sup_{\substack{(y_1,\ldots,y_l) \\\in \prod_{i=1}^lQ_{i,j_i}}} \int_{\mathbb{R}^n\setminus\left(\bigcup_{i=1}^l \Omega_{i}^{*}\right)} |K(x,\vv{y}_{1,m})-K(x,\vv{c}_{(1,j_1),(l,j_l)},\vv{y}_{l+1,m})|dxd\vv{y}_{l+1,m}\\
&\lesssim \sum_{j_1,\ldots,j_l=1}^{\infty}\prod_{i=1}^lw^{\frac{1}{m}}(Q_{i,j_i})\int_{\mathbb{R}^{n(m-l)}}\prod_{i=l+1}^{m}w(y_i)^{\frac{1}{m}}\\
&\quad\quad\times\inf_{\substack{(y_1,\ldots,y_l) \\\in \prod_{i=1}^lQ_{i,j_i}}} \int_{\mathbb{R}^n\setminus\left(\bigcup_{i=1}^l \Omega_{i}^{*}\right)}\frac{\sum_{i=1}^lr_{i,j_i}^{\delta}}{(\sum_{i=1}^m|x-y_i|)^{nm+\delta}}dxd\vv{y}_{l+1,m}\\
&\leq \sum_{j_1,\ldots,j_l=1}^{\infty}\int_{\mathbb{R}^{n(m-l)}}\int_{Q_{l,j_l}}\cdots\int_{Q_{1,j_1}}\prod_{i=1}^mw(y_i)^{\frac{1}{m}}\int_{\mathbb{R}^n\setminus\left(\bigcup_{i=1}^l \Omega_{i}^{*}\right)}\frac{\sum_{i=1}^lr_{i,j_i}^{\delta}}{(\sum_{i=1}^m|x-y_i|)^{nm+\delta}}dxd\vv{y}_{1,m}\\
&=\sum_{k=1}^l\left(\sum_{\substack{j_1,\ldots,j_l=1 \\ r_{k,j_k}\ge r_{i,j_i}\,\text{all }i}}^{\infty}\int_{\mathbb{R}^{n(m-l)}}\int_{Q_{l,j_l}}\cdots\int_{Q_{1,j_1}}\prod_{i=1}^mw(y_i)^{\frac{1}{m}}\right.\\
&\quad\quad\times\left.\int_{\mathbb{R}^n\setminus\left(\bigcup_{i=1}^l \Omega_{i}^{*}\right)}\frac{\sum_{i=1}^lr_{i,j_i}^{\delta}}{(\sum_{i=1}^m|x-y_i|)^{nm+\delta}}dxd\vv{y}_{1,m}\right).
\end{align*}

We will control the term of the summation above with $k=1$; the other terms are handled similarly. Using trivial estimates, Fubini's theorem, and the fact that the $Q_{i,j_i}$ have disjoint interiors, we obtain
\begin{align*}
&\sum_{\substack{j_1,\ldots,j_l=1 \\ r_{1,j_1}\ge r_{i,j_i}\,\text{all }i}}^{\infty}\int_{\mathbb{R}^{n(m-l)}}\int_{Q_{l,j_l}}\cdots\int_{Q_{1,j_1}}\prod_{i=1}^mw(y_i)^{\frac{1}{m}}\int_{\mathbb{R}^n\setminus\left(\bigcup_{i=1}^l \Omega_{i}^{*}\right)}\frac{\sum_{i=1}^lr_{i,j_i}^{\delta}}{(\sum_{i=1}^m|x-y_i|)^{nm+\delta}}dxd\vv{y}_{1,m}\\
&\quad\lesssim \sum_{j_1,\ldots,j_l=1}^{\infty}\int_{\mathbb{R}^{n(m-l)}}\int_{Q_{l,j_l}}\cdots\int_{Q_{1,j_1}}\prod_{i=1}^mw(y_i)^{\frac{1}{m}}\int_{\mathbb{R}^n\setminus\left(\bigcup_{i=1}^l \Omega_{i}^{*}\right)}\frac{r_{1,j_1}^{\delta}}{(\sum_{i=1}^m|x-y_i|)^{nm+\delta}}dxd\vv{y}_{1,m}\\
&\quad\leq \sum_{j_1=1}^{\infty}\int_{Q_{1,j_1}}w(y_1)^{\frac{1}{m}}\int_{\mathbb{R}^n\setminus\left(\bigcup_{i=1}^l \Omega_{i}^{*}\right)}\frac{r_{1,j_1}^{\delta}}{|x-y_1|^{\delta}}\int_{\mathbb{R}^{n(m-l)}}\\
&\quad\quad\times\sum_{j_2,\ldots,j_l=1}^{\infty}\int_{Q_{l,j_l}}\cdots\int_{Q_{2,j_2}}\frac{\prod_{i=2}^mw(y_i)^{\frac{1}{m}}}{(\sum_{i=1}^m|x-y_i|)^{nm}}d\vv{y}_{2,m}dxdy_1\\
&\quad\leq \sum_{j_1=1}^{\infty}\int_{Q_{1,j_1}}w(y_1)^{\frac{1}{m}}\int_{\mathbb{R}^n\setminus\left(\bigcup_{i=1}^l \Omega_{i}^{*}\right)}\frac{r_{1,j_1}^{\delta}}{|x-y_1|^{\delta}}\int_{\mathbb{R}^{n(m-1)}}\frac{\prod_{i=2}^mw(y_i)^{\frac{1}{m}}}{(\sum_{i=1}^m|x-y_i|)^{nm}}d\vv{y}_{2,m}dxdy_1.
\end{align*}

Repeatedly use Lemma 3 first with $K=\frac{1}{(\sum_{i=1}^{m-1}|x-y_i|+|\cdot|)^{nm}}$, second with \\$K=\frac{1}{(\sum_{i=1}^{m-2}|x-y_i|+|\cdot|)^{n(m-1)}}$, etcetera, and the fact that $\displaystyle M\left(w^{\frac{1}{m}}\right)(x)\leq\left[w^{\frac{1}{m}}\right]_{A_1}w(x)^{\frac{1}{m}}$ (which is true by Lemma 1) to control the above expression by
\begin{align*}
&\sum_{j_1=1}^{\infty}\int_{Q_{1,j_1}}w(y_1)^{\frac{1}{m}}\int_{\mathbb{R}^n\setminus\left(\bigcup_{i=1}^l \Omega_{i}^{*}\right)}\frac{r_{1,j_1}^{\delta}M\left(w^{\frac{1}{m}}\right)(x)}{|x-y_1|^{\delta}}\int_{\mathbb{R}^{n(m-2)}}\frac{\prod_{i=2}^{m-1}w(y_i)^{\frac{1}{m}}}{(\sum_{i=1}^{m-1}|x-y_i|)^{n(m-1)}}d\vv{y}_{2,m-1}dxdy_1\\
&\quad\leq\left[w^{\frac{1}{m}}\right]_{A_1}\sum_{j_1=1}^{\infty}\int_{Q_{1,j_1}}w(y_1)^{\frac{1}{m}}\int_{\mathbb{R}^n\setminus\left(\bigcup_{i=1}^l \Omega_{i}^{*}\right)}\frac{r_{1,j_1}^{\delta}w(x)^{\frac{1}{m}}M\left(w^{\frac{1}{m}}\right)(x)}{|x-y_1|^{\delta}}\\
&\quad\quad\quad\times\int_{\mathbb{R}^{n(m-3)}}\frac{\prod_{i=2}^{m-2}w(y_i)^{\frac{1}{m}}}{(\sum_{i=1}^{m-2}|x-y_i|)^{n(m-2)}}d\vv{y}_{2,m-2}dxdy_1\\
&\quad\leq\cdots\\
&\quad\leq\left[w^{\frac{1}{m}}\right]_{A_1}^{m-1}\sum_{j_1=1}^{\infty}\int_{Q_{1,j_1}}w(y_1)^{\frac{1}{m}}\int_{\mathbb{R}^n\setminus\left(\bigcup_{i=1}^l \Omega_{i}^{*}\right)}\frac{r_{1,j_1}^{\delta}w(x)^{\frac{m-1}{m}}}{|x-y_1|^{n+\delta}}dxdy_1\\
&\quad\leq\left[w^{\frac{1}{m}}\right]_{A_1}^{m-1}\sum_{j_1=1}^{\infty}\int_{Q_{1,j_1}}w(y_1)^{\frac{1}{m}}\int_{\left\{|x-y_1|>\frac{1}{2}r_{1,j_1}\right\}}\frac{r_{1,j_1}^{\delta}w(x)^{\frac{m-1}{m}}}{|x-y_1|^{n+\delta}}dxdy_1.
\end{align*}
Use Lemma 3 with $K=\frac{r_{1,j_1}^{\delta}}{|\cdot|^{n+\delta}}\mathbbm{1}_{\left\{|\cdot|>\frac{1}{2}r_{1,j_1}\right\}}$, the fact that $\|K\|_{L^1(\mathbb{R}^n)}\lesssim 1$, the fact that $M\left(w^{\frac{m-1}{m}}\right)(y_1)\leq \left[w^{\frac{m-1}{m}}\right]_{A_1}w^{\frac{m-1}{m}}(y_1)$ (which is true by Lemma 1), the estimates in Remark 2, and the pairwise disjointness of $Q_{1,j_1}$ to further estimate the previous expression by a constant multiplied by
\begin{align*}
&\left[w^{\frac{1}{m}}\right]_{A_1}^{m-1}\sum_{j_1=1}^{\infty}\int_{Q_{1,j_1}}w(y_1)^{\frac{1}{m}}M\left(w^{\frac{m-1}{m}}\right)(y_1)dy_1\\
&\quad\leq\left[w^{\frac{1}{m}}\right]_{A_1}^{m-1}\left[w^{\frac{m-1}{m}}\right]_{A_1}\sum_{j_1=1}^{\infty}\int_{Q_{1,j_1}}w(y_1)dy_1\\
&\quad\leq [w]_{A_{1}}^{\frac{2m-2}{m}}w(\Omega_1).
\end{align*}
Similarly, for $k=2,\ldots,l$, 
\begin{align*}
&\sum_{\substack{j_1,\ldots,j_l=1 \\ r_{k,j_k}\ge r_{i,j_i}\,\text{all }i}}^{\infty}\int_{\mathbb{R}^{n(m-l)}}\int_{Q_{l,j_l}}\cdots\int_{Q_{1,j_1}}\prod_{i=1}^mw(y_i)^{\frac{1}{m}}\int_{\mathbb{R}^n\setminus\left(\bigcup_{i=1}^l \Omega_{i}^{*}\right)}\frac{\sum_{i=1}^lr_{i,j_i}^{\delta}}{(\sum_{i=1}^m|x-y_i|)^{nm+\delta}}dxd\vv{y}_{1,m}\\
&\quad\lesssim\left[w^{\frac{1}{m}}\right]_{A_1}^{m-1}\left[w^{\frac{m-1}{m}}\right]_{A_1}w\left(\Omega_k\right)\\
&\quad \leq [w]_{A_{1}}^{\frac{2m-2}{m}}w\left(\Omega_k\right).
\end{align*}
This completes the proof.
\end{proof}


\section{Main Results}
We give two proofs of the main result. The first proof uses the Calder\'on-Zygmund decomposition and the second proof uses the Nazarov-Treil-Volberg method. Recall that, for a measure $\mu$, the quasinorm $\|\cdot\|_{L^{p,\infty}(\mu)}$ is given by $\|f\|_{L^{p,\infty}(\mu)}^p:=\sup_{t>0}t^p\mu(\{|f|>t\})$.
\begin{theorem}{4}
If $\vec{w}\in A_{(1,\ldots,1)}$ and $f\in L^1(w_i)$ for all $i\in\{1,\ldots,m\}$, then \[\left\|T\left(f_1w_1v_{\vec{w}}^{\frac{1-m}{m}},\ldots,f_mw_mv_{\vec{w}}^{\frac{1-m}{m}}\right)v_{\vec{w}}^{-1}\right\|_{L^{\frac{1}{m},\infty}(v_{\vec{w}})} \lesssim [v_{\vec{w}}]_{A_1}^{2m^2+2m-2}\prod_{i=1}^m\|f_i\|_{L^1(w_i)}.\] 
\end{theorem}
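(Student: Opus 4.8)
The plan is to reduce Theorem 4 to an equivalent single–weight statement and then run a Calder\'on--Zygmund–type argument whose only substantial input is Lemma 4. \emph{Reduction:} write $F_i:=f_iw_i$, so that $\|F_i\|_{L^1(\mathbb{R}^n)}=\|f_i\|_{L^1(w_i)}$ and the left–hand side of Theorem 4 depends on the data only through $v:=v_{\vec w}$ and $F_1,\dots,F_m$. Since $\vec w\in A_{(1,\dots,1)}$ forces $v\in A_1$ (Theorem 6), while conversely every $v\in A_1$ equals $v_{\vec w}$ for the choice $\vec w=(v,\dots,v)$, which lies in $A_{(1,\dots,1)}$ by Lemma 1 with $[v_{\vec w}]_{A_1}$ unchanged, Theorem 4 is equivalent to: for every $v\in A_1$ and all $F_i\in L^1(\mathbb{R}^n)$,
\[
\Big\|T\big(F_1v^{\frac{1-m}{m}},\dots,F_mv^{\frac{1-m}{m}}\big)v^{-1}\Big\|_{L^{\frac1m,\infty}(v)}\lesssim[v]_{A_1}^{2m^2+2m-2}\prod_{i=1}^m\|F_i\|_{L^1(\mathbb{R}^n)}.
\]
By homogeneity I may take $\|F_i\|_{L^1}=1$; it then suffices to bound $v(\{|T(G_1,\dots,G_m)|>\lambda v\})$ by a constant times $[v]_{A_1}^{2m^2+2m-2}\lambda^{-1/m}$ for every fixed $\lambda>0$, where $G_i:=F_iv^{(1-m)/m}$. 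Set $\alpha:=\lambda^{1/m}$.

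\emph{Decomposition:} for each $i$ put $\Omega_i:=\{M_v(F_iv^{-1})>\alpha\}$, using the dyadic form of the operator of Lemma 2, so that $\Omega_i=\bigsqcup_jQ_{i,j}$ is a pairwise disjoint union of dyadic cubes and $v(\Omega_i)\le\alpha^{-1}\|F_iv^{-1}\|_{L^1(v)}=\alpha^{-1}$, with no dependence on $[v]_{A_1}$ (Lemma 2). On each $Q_{i,j}$ subtract the Lebesgue average of $G_i$: let $b_{i,j}:=(G_i-\langle G_i\rangle_{Q_{i,j}})\mathbbm{1}_{Q_{i,j}}$ and $g_i:=G_i\mathbbm{1}_{\mathbb{R}^n\setminus\Omega_i}+\sum_j\langle G_i\rangle_{Q_{i,j}}\mathbbm{1}_{Q_{i,j}}$, so $G_i=g_i+\sum_jb_{i,j}$ and $\int b_{i,j}\,dx=0$. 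Since the dyadic parent of $Q_{i,j}$ is not a stopping cube, $\int_{Q_{i,j}}|F_i|\,dx\lesssim[v]_{A_1}\alpha\,|Q_{i,j}|\inf_{Q_{i,j}}v$; combining this with $v^{\frac{1-m}{m}}\le(\inf_{Q_{i,j}}v)^{\frac{1-m}{m}}$ on $Q_{i,j}$, and with $|F_i|\le\alpha v$ a.e.\ on $\mathbb{R}^n\setminus\Omega_i$, gives
\[
\|b_{i,j}\|_{L^1(\mathbb{R}^n)}\lesssim[v]_{A_1}\,\alpha\,v^{\frac1m}(Q_{i,j}),\qquad |g_i|\lesssim[v]_{A_1}\,\alpha\,v^{\frac1m}\ \text{ on }\mathbb{R}^n,
\]
which are exactly the shapes Lemma 4 is built to absorb. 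By multilinearity $T(G_1,\dots,G_m)=\sum_{A\subseteq\{1,\dots,m\}}T(h_1^A,\dots,h_m^A)$, with $h_i^A=\sum_jb_{i,j}$ if $i\in A$ and $h_i^A=g_i$ otherwise, and I bound each of the finitely many level sets separately.

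\emph{The three kinds of terms:} for $A=\varnothing$, a routine $A_1$ computation from $|g_i|\lesssim[v]_{A_1}\alpha v^{1/m}$ and $v^{\frac{m-1}{m}}|G_i|=|F_i|$ gives $\|g_i\|_{L^m}\lesssim[v]_{A_1}^{C}\alpha^{\frac{m-1}{m}}$ for an absolute power $C=C(m)$, so Theorem 5 with $p_1=\dots=p_m=m$ (valid since $m\ge2$) yields $\|T(g_1,\dots,g_m)\|_{L^1}\lesssim[v]_{A_1}^{mC}\alpha^{m-1}$; Chebyshev against $v$ — on $\{|T|>c\lambda v\}$ one has $v<(c\lambda)^{-1}|T|$ — turns this into a $v$–measure bound $\lesssim[v]_{A_1}^{mC}\alpha^{-1}$. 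For $A\neq\varnothing$, split $\{|T(h^A)|>2^{-m}\lambda v\}$ according to $\bigcup_{i\in A}\Omega_i^{*}$, $\Omega_i^{*}:=\bigcup_j2\sqrt nQ_{i,j}$, and its complement. On $\bigcup_{i\in A}\Omega_i^{*}$, the doubling of $v\in A_1$ gives $v(\Omega_i^{*})\le\sum_jv(2\sqrt nQ_{i,j})\lesssim[v]_{A_1}v(\Omega_i)\lesssim[v]_{A_1}\alpha^{-1}$. Off $\bigcup_{i\in A}\Omega_i^{*}$ (say $A=\{1,\dots,l\}$), the point $x$ lies outside the supports of all $b_{i,j}$, so one uses $\int b_{i,j}\,dx=0$ to replace $K(x,y_1,\dots,y_m)$ by $K(x,y_1,\dots,y_m)-K(x,c_{1,j_1},\dots,c_{l,j_l},y_{l+1},\dots,y_m)$; Fubini and the bounds above, pulling out $\prod_{i\le l}\|b_{i,j_i}\|_{L^1}$ and $\prod_{i>l}|g_i(y_i)|$, reduce $\int_{x\notin\bigcup_{i\in A}\Omega_i^{*}}|T(h^A)(x)|\,dx$ to $([v]_{A_1}\alpha)^m$ times precisely the left side of Lemma 4. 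Lemma 4 with $w=v$ bounds the latter by $[v]_{A_1}^{m+\frac{2m-2}{m}}\alpha^m\sum_{i=1}^lv(\Omega_i)\lesssim[v]_{A_1}^{m+\frac{2m-2}{m}}\alpha^{m-1}$, and a final Chebyshev against $v$ gives a $v$–measure bound $\lesssim[v]_{A_1}^{m+\frac{2m-2}{m}}\alpha^{-1}$.

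\emph{Conclusion and main obstacle:} summing the $2^m$ contributions and using $\alpha^{-1}=\lambda^{-1/m}$ gives $v(\{|T(G_1,\dots,G_m)|>\lambda v\})\lesssim[v]_{A_1}^{m+\frac{2m-2}{m}}\lambda^{-1/m}$ (the bad–parts term dominates), whence $\|\cdot\|_{L^{1/m,\infty}(v)}\lesssim[v]_{A_1}^{m^2+2m-2}$, at least as strong as the claimed estimate. The step I expect to cause trouble is the one the whole scheme is arranged to circumvent: the center–subtraction needs the bad pieces to have \emph{Lebesgue} mean zero, while the level set and the sizes of all pieces must be measured against $v$. The reconciliation is (i) to build the exceptional sets from the $v$–weighted maximal function of $F_iv^{-1}$, so that Lemma 2 controls $v(\Omega_i)$ with no loss of $[v]_{A_1}$, and (ii) to check, via the pointwise $A_1$ inequality applied on stopping cubes and their parents, that the Lebesgue–mean–zero oscillations $b_{i,j}$ and the companion good parts $g_i$ still obey the $v^{1/m}$–weighted estimates needed to feed Lemma 4 — which, being already proved, is the genuine technical core. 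The remaining points (taking the stopping cubes dyadic so the families are pairwise disjoint, and first reducing to $F_i\in L^\infty$ of compact support so the integral representation of $T$ applies, then passing to the limit) are routine.
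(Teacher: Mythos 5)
Your argument is correct and reaches the stated conclusion (in fact with the sharper exponent $[v_{\vec w}]_{A_1}^{m^2+2m-2}$), but it takes a genuinely different route from both of the paper's proofs, and it is worth spelling out exactly where. The paper's Proof~1 performs a Calder\'on--Zygmund decomposition of the \emph{auxiliary} function $f_iw_iv_{\vec w}^{-1}$ with respect to the measure $v_{\vec w}\,dx$, so that the bad pieces have $v_{\vec w}$--weighted mean zero, $\int_{Q_{i,j}}b_{i,j}\,v_{\vec w}\,dx=0$; the operator $T$, however, is applied to $b_{i,j}v_{\vec w}^{1/m}$, whose \emph{Lebesgue} integral is what the kernel--subtraction step actually uses. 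Proof~2 finesses this by introducing the auxiliary cubes $E_{i,j}\subseteq Q_{i,j}$ with $v_{\vec w}(E_{i,j})$ calibrated so that $b_{k,j_k}-c\,t^{1/m}\mathbbm{1}_{E_{k,j_k}}$ has $v_{\vec w}$--weighted mean zero, and telescopes. Your construction sidesteps both devices: you decompose the \emph{actual} argument $G_i=F_iv^{(1-m)/m}$ of $T$, stopping on the dyadic $v$--weighted averages of $F_iv^{-1}$ but subtracting the \emph{Lebesgue} average of $G_i$ on each stopping cube, so $\int_{Q_{i,j}}b_{i,j}\,dx=0$ is exactly the cancellation the kernel--difference step requires. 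The pointwise $A_1$ inequality applied on the stopping cubes (via the non-stopping parent) then yields $\|b_{i,j}\|_{L^1}\lesssim[v]_{A_1}\alpha\,v^{1/m}(Q_{i,j})$ and $|g_i|\lesssim[v]_{A_1}\alpha\,v^{1/m}$ directly, which are precisely the shapes Lemma~4 absorbs; this saves the extra $l$ powers of $[v]_{A_1}$ that the paper pays when converting $v(Q_{i,j})\le[v]_{A_1}|Q_{i,j}|\inf_{Q_{i,j}}v$ after the fact, and that is why your final level-set exponent is $m+\tfrac{2m-2}{m}$ rather than $2m+\tfrac{2m-2}{m}$. Your reduction via $\vec w=(v,\dots,v)$ and Lemma~1 to a single-weight statement is also a clean way to present things, not used in the paper. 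Two small points to fix in the writeup: in the sufficiency claim at the start, the target level-set bound should read $[v]_{A_1}^{(2m^2+2m-2)/m}\lambda^{-1/m}$ rather than $[v]_{A_1}^{2m^2+2m-2}\lambda^{-1/m}$ (you later derive the stronger $[v]_{A_1}^{m+\frac{2m-2}{m}}\lambda^{-1/m}$, so the conclusion stands); and you should state explicitly that $m\ge2$ is used when invoking Theorem~5 with $p_1=\cdots=p_m=m$ (the paper makes the same implicit assumption; the case $m=1$ is Theorem~2).
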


\begin{proof}[Proof 1]
Let $t>0$ be given. We will show that $$v_{\vec{w}}\left(\left\{\left|T\left(f_1w_1v_{\vec{w}}^{\frac{1-m}{m}},\ldots,f_mw_mv_{\vec{w}}^{\frac{1-m}{m}}\right)\right|v_{\vec{w}}^{-1}>t\right\}\right)\lesssim [v_{\vec{w}}]_{A_1}^{2m+\frac{2m-2}{m}}t^{-\frac{1}{m}}\prod_{i=1}^m\|f_i\|_{L^1(w_i)}^{\frac{1}{m}}.$$ Without loss of generality, assume that $f_1,\ldots,f_m$ are continuous functions with compact support and that $\|f_1\|_{L^1(w_1)}=\cdots=\|f_m\|_{L^1(w_m)}=1$. Apply the Calder\'on-Zygmund decomposition to $\displaystyle f_iw_iv_{\vec{w}}^{-1}$ at height $t^{\frac{1}{m}}$ with respect to $v_{\vec{w}}dx$ to write $$f_iw_iv_{\vec{w}}^{-1}=g_i+b_i=g_i+\sum_{j=1}^{\infty}b_{i,j}$$ where the following properties hold:
\begin{enumerate}
\item $\|g_i\|_{L^{\infty}(\mathbb{R}^n)}\lesssim [v_{\vec{w}}]_{A_1}t^{\frac{1}{m}}$ and $\|g_i\|_{L^1(v_{\vec{w}})}\leq \|f_i\|_{L^1(w_i)}$,
\item the $b_{i,j}$ are supported on pairwise disjoint cubes $Q_{i,j}$ satisfying $$\sum_{j=1}^{\infty}v_{\vec{w}}(Q_{i,j})\leq t^{-\frac{1}{m}}\|f_i\|_{L^1(w_i)},$$
\item $\int_{Q_{i,j}}b_{i,j}(x)v_{\vec{w}}(x)dx=0$, 
\item $\|b_{i,j}\|_{L^1(v_{\vec{w}})}\lesssim [v_{\vec{w}}]_{A_1}t^{\frac{1}{m}}v_{\vec{w}}(Q_{i,j})$, and
\item $\|b_i\|_{L^1(v_{\vec{w}})}\lesssim \|f_i\|_{L^1(w_i)}$.
\end{enumerate}
Set
\begin{align*}
&S_1:=\left\{\left|T\left(g_1v_{\vec{w}}^{\frac{1}{m}},g_2v_{\vec{w}}^{\frac{1}{m}},\ldots,g_mv_{\vec{w}}^{\frac{1}{m}}\right)\right|v_{\vec{w}}^{-1}>\frac{t}{2^m}\right\},\\
&S_2:=\left\{\left|T\left(b_1v_{\vec{w}}^{\frac{1}{m}},g_2v_{\vec{w}}^{\frac{1}{m}},\ldots,g_mv_{\vec{w}}^{\frac{1}{m}}\right)\right|v_{\vec{w}}^{-1}>\frac{t}{2^m}\right\},\\
&S_3:=\left\{\left|T\left(g_1v_{\vec{w}}^{\frac{1}{m}},b_2v_{\vec{w}}^{\frac{1}{m}},\ldots,g_mv_{\vec{w}}^{\frac{1}{m}}\right)\right|v_{\vec{w}}^{-1}>\frac{t}{2^m}\right\},\\
&\quad\quad\quad\quad\quad\quad\quad\quad\cdots\\
&S_{2^m}:=\left\{\left|T\left(b_1v_{\vec{w}}^{\frac{1}{m}},b_2v_{\vec{w}}^{\frac{1}{m}},\ldots,b_mv_{\vec{w}}^{\frac{1}{m}}\right)\right|v_{\vec{w}}^{-1}>\frac{t}{2^m}\right\};
\end{align*}
where each $S_s=\left\{\left|T\left(h_1v_{\vec{w}}^{\frac{1}{m}},\ldots,h_mv_{\vec{w}}^{\frac{1}{m}}\right)\right|v_{\vec{w}}^{-1}>\frac{t}{2^m}\right\}$ with $h_i\in\left\{b_i,g_i\right\}$ and all the sets $S_s$ are distinct. Since $$v_{\vec{w}}\left(\left\{\left|T\left(f_1w_1v_{\vec{w}}^{\frac{1-m}{m}},\ldots,f_mw_mv_{\vec{w}}^{\frac{1-m}{m}}\right)\right|v_{\vec{w}}^{-1}>t\right\}\right)\leq \sum_{s=1}^{2^m}v_{\vec{w}}(S_s),$$ it suffices to control each $v_{\vec{w}}(S_s)$.

Use Chebyshev's inequality, the boundedness of $T$ from $(L^m(\mathbb{R}^n))^m$ to $L^{1}(\mathbb{R}^n)$ (which holds by Theorem 5), and property (1) to see
\begin{align*}
v_{\vec{w}}\left(S_1\right)&\lesssim t^{-1}\int_{\mathbb{R}^n}\left|T\left(g_1v_{\vec{w}}^{\frac{1}{m}},\ldots,g_mv_{\vec{w}}^{\frac{1}{m}}\right)(x)\right|dx\\
&\lesssim t^{-1}\prod_{i=1}^m\left(\int_{\mathbb{R}^n}|g_i(x)|^mv_{\vec{w}}(x)dx\right)^{\frac{1}{m}}\\
&\leq t^{-\frac{1}{m}}\prod_{i=1}^m\|g_i\|_{L^1(v_{\vec{w}})}^{\frac{1}{m}}\\
&\leq t^{-\frac{1}{m}}.
\end{align*}

Consider the set $S_s$ for a fixed $2\leq s\leq 2^m$. Suppose that there are $l$ functions of the form $b_i$ and $m-l$ functions of the form $g_i$ appearing as entries in the $T\left(h_1v_{\vec{w}}^{\frac{1}{m}},\ldots,h_mv_{\vec{w}}^{\frac{1}{m}}\right)$ involved in the definition of $S_s$. By symmetry, we may assume that the $b_i$ are in the first $l$ entries and the $g_i$ are in the remaining $m-l$ entries. Let $Q_{i,j}^*:=2\sqrt{n}Q_{i,j}$, $\Omega_i^*:=\bigcup_{j=1}^{\infty}Q_{i,j}^*$, and $\Omega^*:=\bigcup_{i=1}^m\Omega_i^*$.

By the doubling property of $v_{\vec{w}}dx$, the fact that the $Q_{i,j}$ are pairwise disjoint, and property (3), we have
$$v_{\vec{w}}\left(\Omega^*\right)\leq \sum_{i=1}^m\sum_{j=1}^{\infty}v_{\vec{w}}(Q_{i,j}^*)
\lesssim [v_{\vec{w}}]_{A_1}\sum_{i=1}^m\sum_{j=1}^{\infty}v_{\vec{w}}(Q_{i,j})
\leq [v_{\vec{w}}]_{A_1}t^{-\frac{1}{m}}.
$$
Therefore 
\begin{align*}
v_{\vec{w}}(S_s)&\leq v_{\vec{w}}(\Omega^*)+v_{\vec{w}}\left(\left\{\mathbb{R}^n\setminus \Omega^* : \left|T\left(b_1v_{\vec{w}}^{\frac{1}{m}},\ldots,b_lv_{\vec{w}}^{\frac{1}{m}},g_{l+1}v_{\vec{w}}^{\frac{1}{m}},\ldots,g_mv_{\vec{w}}^{\frac{1}{m}}\right)\right|v_{\vec{w}}^{-1}>\frac{t}{2^m}\right\}\right)\\
&\lesssim [v_{\vec{w}}]_{A_1}t^{-\frac{1}{m}}+v_{\vec{w}}\left(\left\{\mathbb{R}^n\setminus \Omega^* : \left|T\left(b_1v_{\vec{w}}^{\frac{1}{m}},\ldots,b_lv_{\vec{w}}^{\frac{1}{m}},g_{l+1}v_{\vec{w}}^{\frac{1}{m}},\ldots,g_mv_{\vec{w}}^{\frac{1}{m}}\right)\right|v_{\vec{w}}^{-1}>\frac{t}{2^m}\right\}\right).
\end{align*}
Now use Chebyshev's inequality, the fact that $\int_{Q_{i,j_i}}b_{i,j_i}(x)v_{\vec{w}}(x)dx=0$, and trivial bounds to estimate
\begin{align*}
v_{\vec{w}}&\left(\left\{\mathbb{R}^n\setminus \Omega^* : \left|T\left(b_1v_{\vec{w}}^{\frac{1}{m}},\ldots,b_lv_{\vec{w}}^{\frac{1}{m}},g_{l+1}v_{\vec{w}}^{\frac{1}{m}},\ldots,g_mv_{\vec{w}}^{\frac{1}{m}}\right)\right|v_{\vec{w}}^{-1}>\frac{t}{2^m}\right\}\right)\\
&\lesssim t^{-1}\int_{\mathbb{R}^n\setminus\Omega^*}\left|T\left(b_1v_{\vec{w}}^{\frac{1}{m}},\ldots,b_lv_{\vec{w}}^{\frac{1}{m}},g_{l+1}v_{\vec{w}}^{\frac{1}{m}},\ldots,g_mv_{\vec{w}}^{\frac{1}{m}}\right)(x)\right|dx
\end{align*}
\begin{align*}
&\leq t^{-1}\sum_{j_1,\ldots,j_l=1}^{\infty}\int_{\mathbb{R}^n\setminus\Omega^*}\left|\int_{\mathbb{R}^{n(m-l)}}\int_{Q_{1,j_1}}\cdots\int_{Q_{l,j_l}}K(x,\vv{y}_{1,m})\right.\\
&\quad\quad \left.\times\left(\prod_{i=1}^lb_{i,j_i}(y_i)\right)\left(\prod_{i=l+1}^mg_i(y_i)\right)\left(\prod_{i=1}^mv_{\vec{w}}(y_i)^{\frac{1}{m}}\right)d\vv{y}_{1,m}\right|dx\\
&\leq t^{-1}\sum_{j_1,\ldots,j_l=1}^{\infty}\left(\prod_{i=1}^l\sup_{Q_{i,j_i}}v_{\vec{w}}^{\frac{1-m}{m}}\right)\int_{\mathbb{R}^{n(m-l)}}\int_{Q_{l,j_l}}\cdots\int_{Q_{1,j_1}}\left(\prod_{i=1}^l|b_{i,j_i}(y_i)|v_{\vec{w}}(y_i)\right)\\
&\quad\quad \times\left(\prod_{i=l+1}^m|g_i(y_i)|v_{\vec{w}}(y_i)^{\frac{1}{m}}\right)\int_{\mathbb{R}^n\setminus\Omega^*}|K(x,\vv{y}_{1,m})-K(x,\vv{c}_{(1,j_1),(l,j_l)},\vv{y}_{l+1,m})|dxd\vv{y}_{1,m}\\
&\leq t^{-1}\sum_{j_1,\ldots,j_l=1}^{\infty}\left(\prod_{i=1}^l\sup_{Q_{i,j_i}}v_{\vec{w}}^{\frac{1-m}{m}}\right)\left(\prod_{i=1}^l\|b_{i,j_i}\|_{L^1(v_{\vec{w}})}\right)\left(\prod_{i=l+1}^m\|g_i\|_{L^{\infty}(\mathbb{R}^n)}\right)\\
&\quad\quad\times\int_{\mathbb{R}^{n(m-l)}}\sup_{\substack{(y_1,\ldots,y_l) \\ \in\prod_{i=1}^lQ_{i,j_i}}}\int_{\mathbb{R}^n\setminus\Omega^*}|K(x,\vv{y}_{1,m})-K(x,\vv{c}_{(1,j_1),(l,j_l)},\vv{y}_{l+1,m})|\\
&\quad\quad\times\left(\prod_{i=l+1}^mv_{\vec{w}}(y_i)^{\frac{1}{m}}\right)dxd\vv{y}_{l+1,m}.\\
\end{align*}
Apply property (4), property (1), the fact that $\left(\sup_{Q_{i,j_i}}v_{\vec{w}}^{\frac{1-m}{m}}\right)= \left(\inf_{Q_{i,j_i}}v_{\vec{w}}\right)^{\frac{1-m}{m}}$ , the $A_1$ condition of $v_{\vec{w}}$, and trivial estimates to bound the above expression by a constant times
\begin{align*}
&[v_{\vec{w}}]_{A_1}^m\sum_{j_1,\ldots,j_l=1}^{\infty}\left(\prod_{i=1}^l\sup_{Q_{i,j_i}}v_{\vec{w}}^{\frac{1-m}{m}}\right)\left(\prod_{i=1}^lv_{\vec{w}}(Q_{i,j_i})\right)\\
&\quad\times\int_{\mathbb{R}^{n(m-l)}}\sup_{\substack{(y_1,\ldots,y_l) \\ \in\prod_{i=1}^lQ_{i,j_i}}}\int_{\mathbb{R}^n\setminus\Omega^*}|K(x,\vv{y}_{1,m})-K(x,\vv{c}_{(1,j_1),(l,j_l)},\vv{y}_{l+1,m})|\\
&\quad\times\left(\prod_{i=l+1}^mv_{\vec{w}}(y_i)^{\frac{1}{m}}\right)d\vv{y}_{l+1,m}dx\\
&\leq [v_{\vec{w}}]_{A_1}^{m+l}\sum_{j_1,\ldots,j_l=1}^{\infty}\left(\prod_{i=1}^l\inf_{Q_{i,j_i}}v_{\vec{w}}\right)^{\frac{1-m}{m}}\left(\prod_{i=1}^l|Q_{i,j_i}|\left(\inf_{Q_{i,j_i}}v_{\vec{w}}\right)\right)\\
&\quad\times\int_{\mathbb{R}^{n(m-l)}}\sup_{\substack{(y_1,\ldots,y_l) \\ \in\prod_{i=1}^lQ_{i,j_i}}}\int_{\mathbb{R}^n\setminus\Omega^*}|K(x,\vv{y}_{1,m})-K(x,\vv{c}_{(1,j_1),(l,j_l)},\vv{y}_{l+1,m})|\\
&\quad\times\left(\prod_{i=l+1}^mw_i(y_i)^{\frac{1}{m}}\right)d\vv{y}_{l+1,m}dx
\end{align*}
\begin{align*}
&\leq [v_{\vec{w}}]_{A_1}^{2m}\sum_{j_1,\ldots,j_l=1}^{\infty}\left(\prod_{i=1}^l|Q_{i,j_i}|\left(\inf_{Q_{i,j_i}}v_{\vec{w}}^{\frac{1}{m}}\right)\right)\\
&\quad\times\int_{\mathbb{R}^{n(m-l)}}\sup_{\substack{(y_1,\ldots,y_l) \\ \in\prod_{i=1}^lQ_{i,j_i}}}\int_{\mathbb{R}^n\setminus\Omega^*}|K(x,\vv{y}_{1,m})-K(x,\vv{c}_{(1,j_1),(l,j_l)},\vv{y}_{l+1,m})|\\
&\quad\times\left(\prod_{i=l+1}^mw_i(y_i)^{\frac{1}{m}}\right)d\vv{y}_{l+1,m}dx\\
&\leq [v_{\vec{w}}]_{A_1}^{2m}\sum_{j_1,\ldots,j_l=1}^{\infty}\prod_{i=1}^lv_{\vec{w}}^{\frac{1}{m}}(Q_{i,j_i})\\
&\quad\times\int_{\mathbb{R}^{n(m-l)}}\sup_{\substack{(y_1,\ldots,y_l) \\ \in\prod_{i=1}^lQ_{i,j_i}}}\int_{\mathbb{R}^n\setminus\Omega^*}|K(x,\vv{y}_{1,m})-K(x,\vv{c}_{(1,j_1),(l,j_l)},\vv{y}_{l+1,m})|\\
&\quad\times\left(\prod_{i=l+1}^mw_i(y_i)^{\frac{1}{m}}\right)d\vv{y}_{l+1,m}dx.\\
\end{align*}
By Lemma 4 and property (2), the above expression is controlled by a constant times $$[v_{\vec{w}}]_{A_1}^{2m+\frac{2m-2}{m}}\sum_{i=1}^{l} v_{\vec{w}}\left(\bigcup_{j=1}^{\infty}Q_{i,j}\right)\lesssim [v_{\vec{w}}]_{A_1}^{2m+\frac{2m-2}{m}}t^{-\frac{1}{m}}.$$ Therefore $$v_{\vec{w}}(S_s)\leq [v_{\vec{w}}]_{A_1}t^{-\frac{1}{m}}+[v_{\vec{w}}]_{A_1}^{2m+\frac{2m-2}{m}}t^{-\frac{1}{m}}\lesssim [v_{\vec{w}}]_{A_1}^{2m+\frac{2m-2}{m}}t^{-\frac{1}{m}}.$$

Putting the previous estimates together gives $$v_{\vec{w}}\left(\left\{\left|T\left(f_1w_1^{\frac{m+1}{m}}v_{\vec{w}}^{-1},\ldots,f_mw_m^{\frac{m+1}{m}}v_{\vec{w}}^{-1}\right)\right|v_{\vec{w}}^{-1}>t\right\}\right)\lesssim t^{-\frac{1}{m}}+\sum_{s=2}^{2^m}[v_{\vec{w}}]_{A_1}^{2m+\frac{2m-2}{m}}t^{-\frac{1}{m}}$$ $$\lesssim[v_{\vec{w}}]_{A_1}^{2m+\frac{2m-2}{m}}t^{-\frac{1}{m}}.$$
\end{proof}

\begin{proof}[Proof 2]
Let $t>0$ be given. We will show that $$v_{\vec{w}}\left(\left\{\left|T\left(f_1w_1v_{\vec{w}}^{\frac{1-m}{m}},\ldots,f_mw_mv_{\vec{w}}^{\frac{1-m}{m}}\right)\right|v_{\vec{w}}^{-1}>t\right\}\right)\lesssim [v_{\vec{w}}]_{A_1}^{2m+\frac{2m-2}{m}}t^{-\frac{1}{m}}\prod_{i=1}^m\|f_i\|_{L^1(w_i)}^{\frac{1}{m}}.$$ Assume that $f_1,\ldots,f_m$ are nonnegative, continuous functions with compact support and that $\|f_1\|_{L^1(w_1)}=\cdots=\|f_m\|_{L^1(w_m)}=1$. Assume that $v_{\vec{w}}(\mathbb{R}^n) > t^{-\frac{1}{m}}$ (otherwise there is nothing to prove). Set \[\Omega_i:=\left\{M_{v_{\vec{w}}}\left(f_iw_iv_{\vec{w}}^{-1}\right)>t^{\frac{1}{m}}\right\}\,\,\,\,\,\, \text{and}\,\,\,\,\,\, \Omega:=\bigcup_{i=1}^m\Omega_i.\] Apply a Whitney decomposition to write $$\Omega_i=\bigcup_{j=1}^{\infty} Q_{i,j},$$ a disjoint union of dyadic cubes where $$2\text{diam}(Q_{i,j})\leq d(Q_{i,j},\mathbb{R}^n\setminus \Omega_i)\leq 8\text{diam}(Q_{i,j}).$$ Put \[g_i:=f_iw_iv_{\vec{w}}^{-1}\mathbbm{1}_{\mathbb{R}^n\setminus \Omega_i}, \quad\quad b_i:=f_iw_iv_{\vec{w}}^{-1}\mathbbm{1}_{\Omega_i}, \quad\quad \text{and}\quad\quad b_{i,j}:=f_iw_iv_{\vec{w}}^{-1}\mathbbm{1}_{Q_{i,j}}.\] Then $$f_iw_iv_{\vec{w}}^{-1}=g_i+b_i=g_i+\sum_{j=1}^{\infty}b_{i,j},$$ where 
\begin{enumerate}
    \item $\|g_i\|_{L^{\infty}(\mathbb{R}^n)}\leq t^{\frac{1}{m}}$ and $\|g_i\|_{L^1(v_{\vec{w}})}\leq \|f_i\|_{L^1(w_i)}$,
    \item the $b_{i,j}$ are supported on pairwise disjoint cubes $Q_{i,j}$ satisfying $$\sum_{j=1}^{\infty}v_{\vec{w}}(Q_{i,j})\lesssim t^{-\frac{1}{m}}\|f_i\|_{L^1(w_i)},$$
    \item $\|b_{i,j}\|_{L^1(v_{\vec{w}})}\leq (17\sqrt{n})^n[v_{\vec{w}}]_{A_1}t^{\frac{1}{m}}v_{\vec{w}}(Q_{i,j})$, and 
    \item $\|b_i\|_{L^1(v_{\vec{w}})}\leq \|f_i\|_{L^1(w_i)}$.
\end{enumerate}

To justify the above properties, since $$f_i(x)w_i(x)v_{\vec{w}}^{-1}(x)\leq M_{v_{\vec{w}}}\left(f_iw_iv_{\vec{w}}^{-1}\right)(x)\leq t^{\frac{1}{m}}$$ for almost every $x \in \mathbb{R}^n\setminus \Omega_i$, it is true that $\|g_i\|_{L^{\infty}(\mathbb{R}^n)}\leq t^{\frac{1}{m}}$. Noticing that $g_i$ is a restriction of $f_iw_iv_{\vec{w}}^{-1}$, we have $\|g_i\|_{L^1(v_{\vec{w}})}\leq \|f_i\|_{L^1(w_i)}$, so property (1) holds. We obtain property (2) using Lemma 2 as follows $$\sum_{j=1}^{\infty}v_{\vec{w}}(Q_{i,j})=v_{\vec{w}}(\Omega_i)\lesssim t^{-\frac{1}{m}}.$$

Addressing (3), for a fixed $Q_{i,j}$, let $Q_{i,j}^*=17\sqrt{n}Q_{i,j}$. Then $Q_{i,j}^*\cap (\mathbb{R}^n\setminus G_i) \neq \emptyset$, so there is a point $x\in Q_{i,j}^*$ such that $M_{v_{\vec{w}}}(f_iw_iv_{\vec{w}}^{-1})(x)\leq t^{\frac{1}{m}}$. In particular, $\int_{Q_{i,j}^*}f_i(y)w_i(y)dy\leq t^{\frac{1}{m}}v_{\vec{w}}(Q_{i,j}^*)$. Since $v_{\vec{w}}(Q_{i,j}^*)\leq (17\sqrt{n})^n[v_{\vec{w}}]_{A_1}v_{\vec{w}}(Q_{i,j})$, we have $$\|b_{i,j}\|_{L^1(v_{\vec{w}})}=\int_{Q_{i,j}}f_i(y)w_i(y)dy\leq\int_{Q_{i,j}^*}f_i(y)w_i(y)dy$$ $$\leq t^{\frac{1}{m}}v_{\vec{w}}(Q_{i,j}^*) \leq (17\sqrt{n})^n[v_{\vec{w}}]_{A_1}t^{\frac{1}{m}}v_{\vec{w}}(Q_{i,j}),$$ proving (3). Property (4) follows since $b_i$ is a restriction of $f_iw_iv_{\vec{w}}^{-1}$.

Set
\begin{align*}
S_1&:=\left\{\left|T\left(g_1v_{\vec{w}}^{\frac{1}{m}},g_2v_{\vec{w}}^{\frac{1}{m}},\ldots,g_mv_{\vec{w}}^{\frac{1}{m}}\right)\right|v_{\vec{w}}^{-1}>\frac{t}{2^m}\right\},\\
S_2&:=\left\{\left|T\left(b_1v_{\vec{w}}^{\frac{1}{m}},g_2v_{\vec{w}}^{\frac{1}{m}},\ldots,g_mv_{\vec{w}}^{\frac{1}{m}}\right)\right|v_{\vec{w}}^{-1}>\frac{t}{2^m}\right\},\\
S_3&:=\left\{\left|T\left(g_1v_{\vec{w}}^{\frac{1}{m}},b_2v_{\vec{w}}^{\frac{1}{m}},\ldots,g_mv_{\vec{w}}^{\frac{1}{m}}\right)\right|v_{\vec{w}}^{-1}>\frac{t}{2^m}\right\},\\
&\quad\quad\quad\quad\quad\quad\quad\quad\cdots\\
S_{2^m}&:=\left\{\left|T\left(b_1v_{\vec{w}}^{\frac{1}{m}},b_2v_{\vec{w}}^{\frac{1}{m}},\ldots,b_mv_{\vec{w}}^{\frac{1}{m}}\right)\right|v_{\vec{w}}^{-1}>\frac{t}{2^m}\right\};
\end{align*}
where each $S_s=\left\{\left|T\left(h_1v_{\vec{w}}^{\frac{1}{m}},\ldots,h_mv_{\vec{w}}^{\frac{1}{m}}\right)\right|v_{\vec{w}}^{-1}>\frac{t}{2^m}\right\}$ with $h_i\in\left\{b_i,g_i\right\}$ and all the sets $S_s$ are distinct. Since \[v_{\vec{w}}\left(\left\{\left|T\left(f_1w_1v_{\vec{w}}^{\frac{1-m}{m}},\ldots,f_mw_mv_{\vec{w}}^{\frac{1-m}{m}}\right)\right|v_{\vec{w}}^{-1}>t\right\}\right)\leq \sum_{s=1}^{2^m}v_{\vec{w}}(S_s),\] it suffices to control each $v_{\vec{w}}(S_s)$.

Use Chebyshev's inequality, the boundedness of $T$ from $(L^m(\mathbb{R}^n))^m$ to $L^{1}(\mathbb{R}^n)$ (which holds by Theorem 5), and property (1) to see
\begin{align*}
v_{\vec{w}}(S_1)&\lesssim t^{-1}\int_{\mathbb{R}^n}\left|T\left(g_1v_{\vec{w}}^{\frac{1}{m}},\ldots,g_mv_{\vec{w}}^{\frac{1}{m}}\right)(x)\right|dx\\
&\lesssim t^{-1}\prod_{i=1}^m\left(\int_{\mathbb{R}^n}|g_i(x)|^mv_{\vec{w}}(x)dx\right)^{\frac{1}{m}}\\
&\leq t^{-\frac{1}{m}}\prod_{i=1}^m\|g_i\|_{L^1(v_{\vec{w}})}^{\frac{1}{m}}\\
&\leq t^{-\frac{1}{m}}.
\end{align*}

Consider the set $S_s$ for a fixed $2\leq s\leq 2^m$. Suppose that there are $l$ functions of the form $b_i$ and $m-l$ functions of the form $g_i$ appearing as entries in the $T\left(h_1w_1^{\frac{1}{m}},\ldots,h_mw_m^{\frac{1}{m}}\right)$ involved in the definition of $S_s$. By symmetry, we may assume that the $b_i$ are in the first $l$ entries and the $g_i$ are in the remaining $m-l$ entries.

Let $c_{i,j}$ denote the center of $Q_{i,j}$ and let $a_{i,j}=\|b_{i,j}\|_{L^1(v_{\vec{w}})}(17\sqrt{n})^{-n}[v_{\vec{w}}]_{A_1}^{-1}$. Set $$E_{i,j}:=Q(c_{i,j},r_{i,j}),$$ where $r_{i,j}>0$ is chosen so that $v_{\vec{w}}(E_{i,j})=a_{i,j}t^{-\frac{1}{m}}$. Note that such $E_{i,j}$ exist since the function $r \mapsto v_{\vec{w}}(Q(x,r))$ increases to $v_{\vec{w}}(\mathbb{R}^n)>t^{-\frac{1}{m}}$ as $r \rightarrow \infty$, approaches $0$ as $r\rightarrow 0$, and is continuous from the right for almost every $x \in \mathbb{R}^n$. Using property (3), we see $$v_{\vec{w}}(E_{i,j})=a_{i,j}t^{-\frac{1}{m}}\leq v_{\vec{w}}(Q_{i,j}).$$ Since $E_{i,j}$ is a cube with the same center as $Q_{i,j}$ and since $v_{\vec{w}}(E_{i,j})\leq v_{\vec{w}}(Q_{i,j}),$ it is true that $E_{i,j}\subseteq Q_{i,j}$. Define $$E_i:=\bigcup_{j=1}^{\infty}E_{i,j}.$$

For $k=0,\ldots, l$, define $$\sigma_{k}:=T\left((17\sqrt{n})^n[v_{\vec{w}}]_{A_1}t^{\frac{1}{m}}\vv{\mathbbm{1}_{E_{1,k}}}v_{\vec{w}}^{\frac{1}{m}},\vv{b_{k+1,l}}v_{\vec{w}}^{\frac{1}{m}},\vv{g_{l+1,m}}v_{\vec{w}}^{\frac{1}{m}}\right).$$ Then, by adding and subtracting $\sigma_k$ for $1\leq k \leq l$, we have 
\begin{align*}
v_{\vec{w}}\left(S_s\right)&\leq\sum_{k=1}^{l}v_{\vec{w}}\left(\left\{\left|\sigma_{k-1}-\sigma_k\right|>\frac{t}{(l+1)2^m}\right\}\right)+v_{\vec{w}}\left(\left\{|\sigma_l|>\frac{t}{(l+1)2^m}\right\}\right)\\
&\lesssim v_{\vec{w}}(\Omega)+\sum_{k=1}^{l}v_{\vec{w}}\left(\left\{\mathbb{R}^n\setminus \Omega: \left|\sigma_{k-1}-\sigma_k\right|>\frac{t}{(l+1)2^m}\right\}\right)\\
&\quad\quad+v_{\vec{w}}\left(\left\{|\sigma_l|>\frac{t}{(l+1)2^m}\right\}\right).
\end{align*}
Using property (2), we have $$v_{\vec{w}}(\Omega)\leq\sum_{i=1}^m\sum_{j=1}^{\infty}v_{\vec{w}}(Q_{i,j})\lesssim t^{-\frac{1}{m}},$$ therefore
\begin{align*}
v_{\vec{w}}\left(S_s\right)&\lesssim t^{-\frac{1}{m}}+\sum_{k=1}^{l}v_{\vec{w}}(P_k)+v_{\vec{w}}(P),
\end{align*}
where
\begin{align*}
&P_k:=\left\{\mathbb{R}^n\setminus \Omega: \left|\sigma_{k-1}-\sigma_k\right|>\frac{t}{(l+1)2^m}\right\},\quad\text{and}\\
&P:=\left\{|\sigma_l|>\frac{t}{(l+1)2^m}\right\}.
\end{align*}

We will first estimate $v_{\vec{w}}(P_k)$ for $k\in\{1,\ldots,l\}$. Notice that 
$$\sigma_{k-1}(x)-\sigma_k(x)$$ $$=T\left((17\sqrt{n})^n[v_{\vec{w}}]_{A_1}t^{\frac{1}{m}}\vv{\mathbbm{1}_{E_{i,k-1}}}v_{\vec{w}}^{\frac{1}{m}},\left(b_k-(17\sqrt{n})^n[v_{\vec{w}}]_{A_1}t^{\frac{1}{m}}\mathbbm{1}_{E_k}\right)v_{\vec{w}}^{\frac{1}{m}},\vv{b_{k+1,l}}v_{\vec{w}}^{\frac{1}{m}},\vv{g_{l+1,m}}v_{\vec{w}}^{\frac{1}{m}}\right)(x).$$
Begin by using Chebyshev's inequality, the fact that $$\int_{\mathbb{R}^n}\left(b_{k,j_k}(x)-(17\sqrt{n})^n[v_{\vec{w}}]_{A_1}t^{\frac{1}{m}}\mathbbm{1}_{E_{k,j_k}}(x)\right)v_{\vec{w}}(x)dx=0,$$ and trivial estimates to see
\begin{align*}
v_{\vec{w}}(P_k)&\lesssim t^{-1}\int_{\mathbb{R}^n\setminus \Omega}\left|\sigma_{k-1}(x)-\sigma_k(x)\right|dx\\
&\leq t^{-1}\sum_{j_1,\ldots,j_l=1}^{\infty}\int_{\mathbb{R}^n\setminus \Omega}\left|\int_{\mathbb{R}^{n(m-l)}}\int_{Q_{l,j_l}}\cdots\int_{Q_{1,j_1}}K(x,\vv{y}_{1,m})\right)\\
&\quad\quad \times \left(\prod_{i=1}^{k-1}(17\sqrt{n})^n[v_{\vec{w}}]_{A_1}t^{\frac{1}{m}}\mathbbm{1}_{E_{i,i,j_i}}(y_i)v_{\vec{w}}(y_i)^{\frac{1}{m}}\right)\\
&\quad\quad\times\left(\left(b_{k,j_k}(y_k)-(17\sqrt{n})^n[v_{\vec{w}}]_{A_1}t^{\frac{1}{m}}\mathbbm{1}_{E_{k,j_k}}(y_k)\right)v_{\vec{w}}(y_k)^{\frac{1}{m}}\right)\\
&\quad\quad\times \left.\left(\prod_{i=k+1}^lb_{i,i,j_i}(y_i)v_{\vec{w}}(y_i)^{\frac{1}{m}}\right)\left(\prod_{i=l+1}^mg_i(y_i)v_{\vec{w}}(y_i)^{\frac{1}{m}}\right)d\vv{y}_{1,m}\right|dx\\
&\leq t^{\frac{k-m-1}{m}}\sum_{j_1,\ldots,j_l=1}^{\infty}\left(\prod_{i=1}^l \sup_{Q_{i,j_i}}v_{\vec{w}}^{\frac{1-m}{m}}\right)\int_{\mathbb{R}^n\setminus \Omega}\int_{\mathbb{R}^{n(m-l)}}\int_{E_{l,j_l}}\cdots\int_{E_{1,j_1}}\\
&\quad\quad \times\left|K(x,\vv{y}_{1,m})-K(x,\vv{c}_{(1,j_1),(l,j_l)},\vv{y}_{l+1,m})\right|\left(\prod_{i=1}^{k-1}v_{\vec{w}}(y_i)\right)\\
&\quad\quad\times\left(\left|b_{k,j_k}(y_k)-(17\sqrt{n})^n[v_{\vec{w}}]_{A_1}t^{\frac{1}{m}}\mathbbm{1}_{E_{k,j_k}}(y_k)\right|v_{\vec{w}}(y_k)\right)\left(\prod_{i=k+1}^lb_{i,j_i}(y_i)v_{\vec{w}}(y_i)\right)\\
&\quad\quad\times\left(\prod_{i=l+1}^mg_i(y_i)v_{\vec{w}}(y_i)^{\frac{1}{m}}\right)d\vv{y}_{1,m}dx.
\end{align*}

Next use the fact that $v_{\vec{w}}(E_{i,j_i})\lesssim t^{-\frac{1}{m}}\|b_{i,j_i}\|_{L^1(v_{\vec{w}})}$, Fubini's theorem, trivial estimates, the fact that $$\left\|b_{k,j_k}-(17\sqrt{n})^n[v_{\vec{w}}]_{A_1}t^{\frac{1}{m}}\mathbbm{1}_{E_{k,j_k}}\right\|_{L^1(v_{\vec{w}})}\lesssim \left\|b_{k,j_k}\right\|_{L^1(v_{\vec{w}})},$$ and property (1) to control 
\begin{align*}
v_{\vec{w}}(P_k)&\lesssim t^{-1}\sum_{j_1,\ldots,j_l=1}^{\infty}\left(\prod_{i=1}^l \sup_{Q_{i,j_i}}v_{\vec{w}}^{\frac{1-m}{m}}\right)\left(\prod_{i=1}^{k-1}\|b_{i,j_i}\|_{L^1(v_{\vec{w}})}\right)\left\|b_{k,j_k}-(17\sqrt{n})^n[v_{\vec{w}}]_{A_1}t^{\frac{1}{m}}\mathbbm{1}_{E_{k,j_k}}\right\|_{L^1(v_{\vec{w}})}\\
&\quad\times\left(\prod_{i=k+1}^l\|b_{i,j_i}\|_{L^1(v_{\vec{w}})}\right)\left(\prod_{i=l+1}^m\|g_i\|_{L^{\infty}(\mathbb{R}^n)}\right)\int_{\mathbb{R}^{n(m-l)}}\sup_{\substack{(y_1,\ldots,y_l) \\\in \prod_{i=1}^lQ_{i,j_i}}} \int_{\mathbb{R}^n\setminus \Omega}\\
&\quad\times|K(x,\vv{y}_{1,m})-K(x,\vv{c}_{(1,j_1),(l,j_l)},\vv{y}_{l+1,m})|\left(\prod_{i=l+1}^mv_{\vec{w}}(y_i)^{\frac{1}{m}}\right)dxd\vv{y}_{l+1,m}
\end{align*}
\begin{align*}
&\lesssim t^{-\frac{l}{m}}\sum_{j_1,\ldots,j_l=1}^{\infty}\left(\prod_{i=1}^l \sup_{Q_{i,j_i}}v_{\vec{w}}^{\frac{1-m}{m}}\right)\left(\prod_{i=1}^l\|b_{i,j_i}\|_{L^1(v_{\vec{w}})}\right)\\
&\quad\times\int_{\mathbb{R}^{n(m-l)}}\sup_{\substack{(y_1,\ldots,y_l) \\\in \prod_{i=1}^lQ_{i,j_i}}} \int_{\mathbb{R}^n\setminus \Omega} |K(x,\vv{y}_{1,m})-K(x,\vv{c}_{(1,j_1),(l,j_l)},\vv{y}_{l+1,m})|\\
&\quad\times\left(\prod_{i=l+1}^mv_{\vec{w}}(y_i)^{\frac{1}{m}}\right)dxd\vv{y}_{l+1,m}.
\end{align*}

Use property (3), the fact that $\left(\sup_{Q_{i,j_i}}v_{\vec{w}}^{\frac{1-m}{m}}\right)= \left(\inf_{Q_{i,j_i}}v_{\vec{w}}\right)^{\frac{1-m}{m}}$, the $A_1$ condition of $v_{\vec{w}}$, and trivial estimates to estimate $v_{\vec{w}}(P_k)$
\begin{align*}
v_{\vec{w}}(P_k)&\lesssim [v_{\vec{w}}]_{A_1}^{l}\sum_{j_1,\ldots,j_l=1}^{\infty}\left(\prod_{i=1}^l \sup_{Q_{i,j_i}}v_{\vec{w}}^{\frac{1-m}{m}}\right)\left(\prod_{i=1}^lv_{\vec{w}}(Q_{i,j_i})\right)\\
&\quad\times\int_{\mathbb{R}^{n(m-l)}}\sup_{\substack{(y_1,\ldots,y_l) \\\in \prod_{i=1}^lQ_{i,j_i}}} \int_{\mathbb{R}^n\setminus \Omega} |K(x,\vv{y}_{1,m})-K(x,\vv{c}_{(1,j_1),(l,j_l)},\vv{y}_{l+1,m})|\\
&\quad\times\left(\prod_{i=l+1}^mv_{\vec{w}}(y_i)^{\frac{1}{m}}\right)dxd\vv{y}_{l+1,m}\\
&\leq[v_{\vec{w}}]_{A_1}^{l}\sum_{j_1,\ldots,j_l=1}^{\infty}\left(\prod_{i=1}^l \inf_{ Q_{i,j_i}}v_{\vec{w}}\right)^{\frac{1-m}{m}}\left(\prod_{i=1}^lv_{\vec{w}}(Q_{i,j_i})\right)\\
&\quad\times\int_{\mathbb{R}^{n(m-l)}}\sup_{\substack{(y_1,\ldots,y_l) \\\in \prod_{i=1}^l Q_{i,j_i}}} \int_{\mathbb{R}^n\setminus \Omega} |K(x,\vv{y}_{1,m})-K(x,\vv{c}_{(1,j_1),(l,j_l)},\vv{y}_{l+1,m})|\\
&\quad\times\left(\prod_{i=l+1}^mv_{\vec{w}}(y_i)^{\frac{1}{m}}\right)dxd\vv{y}_{l+1,m}\\
&\leq [v_{\vec{w}}]_{A_1}^{2l}\sum_{j_1,\ldots,j_l=1}^{\infty}\left(\prod_{i=1}^l \left|Q_{i,j_i}\right|\left(\inf_{Q_{i,j_i}}v_{\vec{w}}^{\frac{1}{m}}\right)\right)\\
&\quad\times\int_{\mathbb{R}^{n(m-l)}}\sup_{\substack{(y_1,\ldots,y_l) \\\in \prod_{i=1}^l Q_{i,j_i}}} \int_{\mathbb{R}^n\setminus \Omega} |K(x,\vv{y}_{1,m})-K(x,\vv{c}_{(1,j_1),(l,j_l)},\vv{y}_{l+1,m})|\\
&\quad\times\left(\prod_{i=l+1}^mv_{\vec{w}}(y_i)^{\frac{1}{m}}\right)dxd\vv{y}_{l+1,m}
\end{align*}
\begin{align*}
&\leq [v_{\vec{w}}]_{A_1}^{2m}\sum_{j_1,\ldots,j_l=1}^{\infty}\left(\prod_{i=1}^l v_{\vec{w}}^{\frac{1}{m}}(Q_{i,j_i})\right)\\
&\quad\times\int_{\mathbb{R}^{n(m-l)}}\sup_{\substack{(y_1,\ldots,y_l) \\\in \prod_{i=1}^l Q_{i,j_i}}} \int_{\mathbb{R}^n\setminus G} |K(x,\vv{y}_{1,m})-K(x,\vv{c}_{(1,j_1),(l,j_l)},\vv{y}_{l+1,m})|\\
&\quad\times\left(\prod_{i=l+1}^mv_{\vec{w}}(y_i)^{\frac{1}{m}}\right)dxd\vv{y}_{l+1,m}.\\
\end{align*}
Use Lemma 4 and property (2) to finish the estimate $$v_{\vec{w}}(P_k)\lesssim [v_{\vec{w}}]_{A_1}^{2m+\frac{2m-2}{m}}\sum_{i=1}^{l}v_{\vec{w}}(\Omega)\lesssim[v_{\vec{w}}]_{A_1}^{2m+\frac{2m-2}{m}+1}t^{-\frac{1}{m}}.$$

The control of $v_{\vec{w}}(P)$ follows from Chebyshev's inequality, construction of the sets $E_i$, property (1), and property (4):
\begin{align*}
v_{\vec{w}}(P)&\lesssim t^{-1}\int_{\mathbb{R}^n}\left|\sigma_l(x)\right|dx\\
&\lesssim t^{-1+\frac{l}{m}}\left(\prod_{i=1}^lv_{\vec{w}}(E_i)^{\frac{1}{m}}\right)\left(\prod_{i=l+1}^m\left(\int_{\mathbb{R}^n}g_i(x)^mv_{\vec{w}}(x)dx\right)^{\frac{1}{m}}\right)\\
&\lesssim t^{-\frac{1}{m}}\left(\prod_{i=1}^l\|b_i\|_{L^1(v_{\vec{w}})}^{\frac{1}{m}}\right)\left(\prod_{i=l+1}^m\|g_i\|_{L^1(v_{\vec{w}})}^{\frac{1}{m}}\right)\\
&\leq t^{-\frac{1}{m}}.
\end{align*}

Put the estimates of $v_{\vec{w}}(P_k)$ and $v_{\vec{w}}(P)$ together to get
$$v_{\vec{w}}(S_s)\lesssim [v_{\vec{w}}]_{A_1}t^{-\frac{1}{m}}+\sum_{k=1}^{l}[v_{\vec{w}}]_{A_1}^{2m+\frac{2m-2}{m}+1}t^{-\frac{1}{m}}+t^{-\frac{1}{m}} \lesssim [v_{\vec{w}}]_{A_1}^{2m+\frac{2m-2}{m}+1}t^{-\frac{1}{m}}.$$

Finally, use the estimates of $v_{\vec{w}}(S_s)$, $1\leq s \leq 2^m$ to complete the proof 
$$v_{\vec{w}}\left(\left\{\left|T\left(f_1w_1v_{\vec{w}}^{\frac{1-m}{m}},\ldots,f_mw_mv_{\vec{w}}^{\frac{1-m}{m}}\right)\right|v_{\vec{w}}^{-1}>t\right\}\right)\leq |S_1|+\sum_{s=2}^{2^m}|S_s|$$ $$\lesssim t^{-\frac{1}{m}} +\sum_{s=2}^{2^m}[v_{\vec{w}}]_{A_1}^{2m+\frac{2m-2}{m}+1}t^{-\frac{1}{m}}\lesssim [v_{\vec{w}}]_{A_1}^{2m+\frac{2m-2}{m}+1}t^{-\frac{1}{m}}.$$
\end{proof}


\begin{bibdiv}
\begin{biblist}
\bib{CRR2018}{article}{
title={A sparse approach to mixed weak type inequalities},
author={M. Caldarelli},
author={I. P. Rivera-R\'ios},
date={December 2018},
journal={ArXiv e-prints},
}

\bib{CUMP2005}{article}{
title={Weighted weak-type inequalities and a conjecture of Sawyer},
author={D. Cruz-Uribe},
author={J. M. Martell},
author={C. P\'erez},
date={2005},
journal={Internat. Math. Res. Notices},
volume={30},
pages={1849--1871},
}

\bib{DLP2015}{article}{
title={Sharp weighted bounds for multilinear maximal functions and Calder\'on-Zygmund operators},
author={W. Dami\'an},
author={A. K. Lerner},
author={C. P\'erez},
date={2015},
journal={J. Fourier Anal. Appl.},
volume={21},
number={1},
pages={161--181},
}

\bib{Grafakos1}{book}{
title={Classical Fourier analysis},
author={L. Grafakos},
publisher={Springer},
edition={Third edition},
volume={249},
address={New York},
series={Graduate Texts in Mathematics},
date={2014}
}

\bib{Grafakos2}{book}{
title={Modern Fourier analysis},
author={L. Grafakos},
publisher={Springer},
edition={Third edition},
volume={250},
address={New York},
series={Graduate Texts in Mathematics},
date={2014}
}

\bib{GT2002}{article}{
title={Multilinear Calder\'on-Zygmund theory},
author={L. Grafakos},
author={R. Torres},
journal={Adv. Math.},
volume={165},
date={2002},
number={1},
pages={124--164}
}

\bib{HLYY2012}{article}{
title={Boundedness of Calder\'on-Zygmund operators on non-homogeneous metric measure spaces},
author={T. Hyt\"onen},
author={S. Liu},
author={D. Yang},
author={D. Yang},
date={2012},
journal={Canad. J. Math.},
volume={64},
number={4},
pages={892--923},
}

\bib{LOP2017}{article}{
title={Proof of an extension of E. Sawyer's conjecture about weighted mixed weak-type estimates},
author={K. Li},
author={S. Ombrosi},
author={C. P\'erez},
journal={Math. Ann.},
volume={374},
date={2019},
number={1-2},
pages={907--929}
}

\bib{LOPTTG2009}{article}{
title={New maximal functions and multiple weights for the multilinear Calder\'on-Zygmund theory},
author={A. K. Lerner},
author={S. Ombrosi},
author={C. P\'erez},
author={R. H. Torres},
author={R. Trujillo-Gonz\'alez},
journal={Adv. Math},
volume={220},
date={2009},
number={4},
pages={1222--1264}
}

\bib{LOP2019}{article}{
title={Weighted mixed weak-type inequalities for multilinear operators},
author={K. Li},
author={S. J. Ombrosi},
author={B. Picardi},
date={2019},
journal={Studia Math.},
volume={244},
number={2},
pages={203--215},
}

\bib{L2016}{article}{
title={Endpoint estimates for multilinear singular integral operators},
author={Y. Lin},
date={2016},
journal={Georgian Math. J.},
volume={23},
number={4}
pages={559--570},
}

\bib{MN2009}{article}{
title={Weighted norm inequalities for paraproducts and bilinear pseudodifferential operators with mild regularity},
author={D. Maldonado},
author={V. Naibo},
date={2009},
journal={J. Fourier Anal. Appl.},
volume={15},
number={2},
pages={218--261},
}

\bib{NTV1998}{article}{
title={Weak type estimates and Cotlar inequalities for Calder\'on-Zygmund operators on nonhomogeneous spaces},
author={F. Nazarov},
author={S. Treil},
author={A. Volberg},
journal={Internat. Math. Res. Notices},
volume={9},
date={1998},
number={9},
pages={463--487}
}

\bib{OP2016}{article}{
title={Mixed weak type estimates: examples and counterexamples related to a problem of E. Sawyer},
author={S. Ombrosi},
author={C. P\'erez},
volume={145},
date={2016},
journal={Colloq. Math.},
number={2},
pages={259--272},
}

\bib{OPR2016}{article}{
title={Quantitative weighted mixed weak-type inequalities for classical operators},
author={S. Ombrosi},
author={C. P\'erez},
author={J. Recchi},
journal={Indiana U. Math. J.},
volume={65},
date={2016},
number={2},
pages={615--640}
}

\bib{PT2014}{article}{
title={Minimal regularity conditions for the end-point estimate of bilinear Calder\'on-Zygmund operators},
author={C. P\'erez},
author={R. H. Torres},
date={2014},
journal={Proc. Amer. Math. Soc. Ser. B},
volume={1},
pages={1--13},
}

\bib{Stein}{book}{
title={Singular Integrals and Differentiability Properties of Functions},
author={E. M. Stein},
publisher={Princeton Univ. Press},
date={1970},
address={Princeton, NJ},
}

\bib{SW1958}{article}{
title={Interpolation of operators with change of measure},
author={E. M. Stein},
author={G. Weiss},
journal={Trans. Amer. Math. Soc.},
date={1958},
volume={87},
pages={159-172},
}

\bib{S2018}{article}{
title={A different approach to endpoint weak-type estimates for Calder\'on-Zygmund operators},
author={C. B. Stockdale},
journal={ArXiv e-prints},
date={December 2018},
}

\bib{SW2019}{article}{
title={An endpoint weak-type estimate for multilinear Calder\'on-Zygmund operators},
author={C. B. Stockdale},
author={B. D. Wick},
journal={J Fourier Anal Appl},
date={2019},
pages={https://doi.org/10.1007/s00041-019-09676-y},
}
\end{biblist}
\end{bibdiv}

\end{document}